\newcommand{\ncmnd}{\newcommand}
\ncmnd{\nthm}{\newtheorem}
\ncmnd{\R}{\mathbf{R}}
\theoremstyle{plain}
\theoremstyle{definition}
\begin{document}

\title{Extremal problems for surfaces of prescribed topological type (1)}
\author{V. N. Lagunov}
\author{A. I. Fet}
\thanks{Russian version in Siberian Math. J., Vol 4, 1963, pp. 145-176. Translation and remarks enclosed in brackets [ ] are by Richard L. Bishop, University of Illinoi at Urbana-Champaign. Many parts have been abbreviated and in some case alternative proofs(?) were devised emphasizing intuition. Horizontal lines indicate original pages.}
\maketitle
\section{Introduction}\label{sec:introduction}
The study of extremal properties of surfaces with bounded smooth curvature shows that geometrical properties in the large are solidly connected to their topological structure. We take up some of these questions here.

Notation: $F_R$ -- all compact $C^2$ $n$-dimensional surfaces
contained in $E^{n+1}, n \ge 2$, with principal radii of curvature
all $\ge R$. \cite{L1,L2,L3,L4}; let 
$$\kappa(F_R) = \inf \{\rho : \textrm{ there is a ball of radius }
\rho \textrm{ interior to a surface in } F_R\}.$$
It was shown that $\kappa (F_R) = \kappa_0R$, where $\kappa_0 =
2/\sqrt 3 -1 \cong 0.155$.

The sharpness of this bound was shown by constructing examples
of surfaces $F_{(\epsilon)} \in F_R$, containing spheres of radii
$\kappa_0R+\epsilon$, for $\epsilon > 0$ arbitrarily small. The
surfaces $F_{(\epsilon)}$ have nonzero Betti numbers and bound
a body of complicated topological structure; precisely,
$F_{(\epsilon)}$ is homeomorphic to a boundary $S^n_k$ of a
ball with $k$ handles $h^{n+1}_k$, but bounds a solid not
homeomorphic to $h^{n+1}_k$ (precisely, see below, p. 188).
There remains the question of whether the above bound can
be improved if instead of $F_R$ some subset of $F_R$ is
considered, consisting of surfaces of sufficiently simple 
topological structure of sufficiently simple imbedding in
$E^{n+1}$. Some results in this direction were already presented
by us in the Second All-Union topological conference in Tbilisi in
1959 \cite{LF}.

We introduce notation: For $F\in F_R$ let $\kappa(F)$ be the radius
of a maximal ball interior to $F$. $M \subset F_R$, $\kappa(M) =
\inf_{F\in M} \kappa(F)$.

Let $\mathcal{S}$ be the subset of surfaces in $F_R$ homeomorphic
to $S^n$; $\mathcal{H}_k$ those homeomorphic to a sphere with $k$
handles, $S^n_k$;

\medskip
\hrule\smallskip
\noindent p.~146.
\medskip

$\mathcal{H}^0_k$ those which bound a solid ball with $k$ handles,
$h^{n+1}_k$; $Tr^0$ those which bound a solid toroidal ring, cf.
\S \ref{sec:applications}, part 1.
$$\kappa_1 = \sqrt{3/2} -1 \cong 0.2246.$$
Then
\begin{thm} \label{Theorem 1} If the first Betti number of $F$ mod 2 is
zero, then $\kappa(F) \ge \kappa_1R$.
\end{thm}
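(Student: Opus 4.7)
The plan is to argue by contradiction. Assume $F$ has vanishing first Betti number mod $2$, yet there exists an inscribed ball $B \subset F$ of radius $\rho < \kappa_1 R$. Taking $\rho$ maximal and $O$ the center of $B$, the goal is to exhibit a nonzero class in $H_1(F;\mathbf{Z}/2)$.

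First I analyze the tangency structure. Let $T$ be the set of tangency points of $\partial B$ with $F$. By maximality of $\rho$, the outward radial unit vectors $u_i = (p_i - O)/\rho$, for $p_i \in T$, must have $0$ in the interior of their convex hull: otherwise there is a unit vector $v$ with $v \cdot u_i < 0$ for every $i$, and shifting $O$ by $\varepsilon v$ would strictly enlarge the distance to every $p_i$, contradicting maximality. In particular $|T| \ge n + 2 \ge 4$. At each $p_i \in T$ the curvature hypothesis supplies an inward osculating ball $D_i = B_R(c_i)$, with $c_i = O - (R - \rho) u_i$, tangent to both $B$ and $F$ at $p_i$ and locally contained in the interior of $F$.

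The geometric heart of the proof is the identification of $\kappa_1 = \sqrt{3/2} - 1$ as the value of $\rho/R$ at which four balls of radius $R$ in regular tetrahedral arrangement can be mutually tangent and simultaneously tangent to a central $\rho$-ball (a regular tetrahedron of edge $2R$ has circumradius $R\sqrt{3/2}$, which must equal $R + \rho$). I would aim to show that for $\rho < \kappa_1 R$ one can always extract from $T$ four tangency points whose normals $u_i$ split, across some hyperplane through $O$, into two pairs whose associated osculating balls $D_i$ form caps on opposite sides of $B$ that hug it closely enough to provide a topological scaffold. This is the step where the exact constant $\kappa_1$ enters.

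Using this scaffold I would construct a $1$-cycle $\alpha \subset F$ by fixing tangent points $p, q \in T$ on opposite sides of the separating hyperplane and connecting $p$ to $q$ by two arcs in $F$, one running along each cap; the two arcs pass around $B$ on opposite sides, so their concatenation $\alpha$ encircles $B$. The final, and I expect hardest, step is to show that $[\alpha] \neq 0$ in $H_1(F;\mathbf{Z}/2)$: one route is a linking/duality argument, exhibiting an $(n-1)$-cycle in the interior of $F$ built from the centers $c_i$ and straight segments in $F^\circ$, with which $\alpha$ has odd mod-$2$ linking number in $E^{n+1}$. The two principal difficulties are (i) verifying that the extracted tetrahedral separation really does follow from $\rho < \kappa_1 R$, rather than only at the threshold, and (ii) arranging the two competing arcs to lie on $F$ (not merely in the ambient space), which requires the curvature bound to control how $F$ folds around the cage of osculating balls.
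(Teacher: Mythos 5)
Your plan breaks down at the very first concrete claim: that the center $O$ of a maximal inscribed ball must have at least $n+2 \ge 4$ tangency points $p_i \in T$. The argument you give only shows that $0$ lies in the \emph{closed} convex hull of the unit normals $u_i$ (if $0$ were outside there would be a strictly separating $v$); it does not put $0$ in the \emph{interior} of the hull, and $0$ can lie in the convex hull of as few as two unit vectors. More decisively, the claim is simply false without using the topological hypothesis: the examples $F_{(\epsilon)}$ from Lagunov's earlier work (recalled in \S\ref{sec:introduction}) are flattened surfaces whose central set has multiplicity exactly $3$, so the maximal inscribed ball there touches $F$ at only three points, even though $n=2$. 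Those surfaces satisfy $H_1 \ne 0$, but your tangency-count argument makes no reference to topology and would wrongly apply to them. The topological hypothesis is exactly what is needed to exclude the multiplicity-$3$ case, and it cannot be bypassed by local convexity reasoning at a single critical point of the distance function.

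There is also a logical short-circuit in the remainder of the plan. If the center $O$ did have a fourth tangency point, then by Lemma~\ref{Lemma 1} some pair of the four normals makes angle $\le 2\csc^{-1}\sqrt{3/2}$, and the curvature constraint (Lemma~\ref{Lemma 2}) already forces $\rho \ge \kappa_1 R$ with no homology at all. So the hypotheses you set up for the linking/1-cycle construction --- four tangencies \emph{and} $\rho < \kappa_1 R$ --- are jointly vacuous; nothing would be gained by producing a nonzero class in $H_1(F;\mathbf{Z}/2)$ in that regime. The genuine work of the theorem lies elsewhere: one must show that the \emph{entire} central set $Z$ cannot have multiplicity exactly $3$ when $H_1(F,J_2)=0$. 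The paper does this globally: $Z$ is organized as a normally imbedded 3-complex (Lemma~\ref{Lemma 4}), its strata $Z^{n-1}_j$ are classified by the holonomy of the triad bundle, Lemmas~\ref{Lemma 11}, \ref{Lemma 12}, \ref{Lemma 15} rule out the second and third classes under the stated homology and orientability conditions, and the representing graph $\Gamma$ (Lemma~\ref{Lemma 16}) then contains either a cycle or a proper tree; either alternative produces a cycle in $T^{n+1}$ that cannot bound, contradicting $H^{fin}_1(T,J_2)=0$ and $H^{inf}_{n-1}(T,J)=0$, which in turn follow from $H_1(F,J_2)=0$ via Poincar\'e and Alexander duality and Jordan--Brouwer. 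Your plan contains none of this machinery, and the parts you flag as "principal difficulties'' are in fact symptoms that the proposed local construction cannot replace the global analysis of the central set. The approach as written does not lead to a proof.
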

In case $H_1(F,Z_2) \ne 0$ we turn to the universal covering
solid $T$ of the boundary $F$; in connection with this a condition
is included on the homotopy type.
\begin{thm}\label{Theorem 2} Let $F \in F_R$. If the homomorphism
$h: \pi_1(F) \to \pi_1(T)$ induced by the inclusion of $F$ in
$T$ is an isomorphism, and $\pi_2(T) = 0$, then $\kappa(F) \ge
\kappa_1R$.
\end{thm}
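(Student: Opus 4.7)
The plan is to reduce Theorem~\ref{Theorem 2} to Theorem~\ref{Theorem 1} by passing to the universal covering solid and lifting the hypothetical small inscribed ball. Let $V \subset E^{n+1}$ denote the compact body with $\partial V = F$, and let $p \colon \tilde V \to V$ be the universal cover; the hypothesis $\pi_2(T) = 0$, together with the fact that coverings preserve $\pi_k$ for $k \ge 2$, gives that $\tilde V$ is $2$-connected, and since $p$ is a local isometry, $\tilde V$ is locally isometric to $E^{n+1}$.

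Suppose for contradiction that $\kappa(F) < \kappa_1 R$, so that $V$ contains an open ball $B$ of some radius $\rho < \kappa_1 R$. Because $B$ is simply connected, it lifts isometrically to an open ball $\tilde B \subset \tilde V$ of the same radius, sitting in the interior of $\tilde V$. The boundary $\partial \tilde V$ is a disjoint union of covers of $F$, with fundamental group of each component equal to $\ker h'$, where $h' \colon \pi_1(F) \to \pi_1(V)$ is induced by inclusion. I read the hypothesis ``$h$ is an isomorphism'' as saying that $h'$ is injective (equivalently, $F \hookrightarrow V$ is $\pi_1$-injective); then each component of $\partial \tilde V$ is simply connected. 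Fixing such a component $\tilde F$, we have a hypersurface in $\tilde V$ that is simply connected (in particular $H_1(\tilde F, Z_2) = 0$) and whose principal radii of curvature are $\ge R$.

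Once $\tilde F$ satisfies the topological hypothesis of Theorem~\ref{Theorem 1}, I would apply an intrinsic version of that theorem to the pair $(\tilde F, \tilde V)$, concluding $\kappa(\tilde F) \ge \kappa_1 R$; but the ball $\tilde B$ shows $\kappa(\tilde F) \le \rho < \kappa_1 R$, a contradiction.

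The main obstacle will be justifying this ``intrinsic'' application of Theorem~\ref{Theorem 1}: that theorem was stated for compact hypersurfaces in $E^{n+1}$, whereas $\tilde V$ is only locally isometric to $E^{n+1}$ (its developing map typically fails to be injective), and when $\pi_1(V)$ is infinite the covering $\tilde F$ is non-compact. One would need to verify that the proof of Theorem~\ref{Theorem 1} depends only on local curvature comparisons together with the topological hypothesis $H_1(\cdot, Z_2) = 0$, and then supply enough global control (for instance by working in a sufficiently large compact region of $\tilde V$ containing $\tilde B$, or by an explicit developing argument through the flat ambient) to carry the argument through in this covering setting.
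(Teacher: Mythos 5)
Your overall setup — passing to the universal cover $\tilde T$, noting that injectivity of $h$ makes $\tilde F$ simply connected, and using $\pi_2(T)=0$ via the covering — is the right starting point; it matches the paper's preparatory Lemma \ref{Lemma 20}. But your proposed finish, ``apply an intrinsic version of Theorem \ref{Theorem 1} to $(\tilde F, \tilde V)$,'' is exactly where the argument cannot go through, and you acknowledge this yourself without resolving it. Theorem \ref{Theorem 1}'s proof is not local: it crucially uses Alexander duality and the Jordan--Brouwer separation theorem for a compact hypersurface in $E^{n+1}$ to derive conditions 4), 5), 6) of Lemma \ref{Lemma 19}. None of this applies to $\tilde F \subset \tilde T$, since $\tilde T$ is a flat manifold but not a region in Euclidean space. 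There is also no free-standing ``$\kappa(\tilde F)$'' to compare against $\kappa_1 R$: the geometric input (multiplicity of the central set, balls rolling inside $F$) is taken downstairs in $E^{n+1}$, where it makes sense; you cannot simply lift a maximal inscribed ball upstairs and appeal to a theorem about inscribed balls in Euclidean hypersurfaces.

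The paper's actual route circumvents this by not reducing to Theorem \ref{Theorem 1} at all. The geometry stays downstairs: if $\kappa(F) < \kappa_1 R$, then by Lemma \ref{Lemma 2} the central set $Z$ of $F$ in $T$ has multiplicity exactly 3, hence by Lemma \ref{Lemma 4} is a 3-complex normally imbedded in $T$, and by Lemma \ref{Lemma 5} its preimage $\tilde Z$ is a 3-complex normally imbedded in $\tilde T$. The contradiction then comes from the purely topological Lemma \ref{Lemma 19}, whose hypotheses 1)--6) are verified for $\tilde T$ directly: 1), 2), 4) from connectedness and simple-connectedness of $\tilde F$ (Lemma \ref{Lemma 20}), 3), 5) from simple-connectedness of $\tilde T$, and 6) from $\pi_2(T)=0 \Rightarrow H^{fin}_2(\tilde T, J)=0$ (Hurewicz) together with Poincar\'e duality for the open manifold $\tilde T$. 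So the key missing ingredient in your proposal is the abstraction that separates the Euclidean geometry (multiplicity of the cut locus) from the topology of normally imbedded 3-complexes; without invoking something like Lemma \ref{Lemma 19}, the reduction to Theorem \ref{Theorem 1} cannot be completed.
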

For $n=2$ the condition of Theorem \ref{Theorem 2} can be weakened.
\begin{thm}\label{Theorem 3} Let $n=2$, $F \in F_R$, and $h: \pi_1(F)
\to \pi_1(T)$ be onto. Then $\kappa(F) \ge \kappa_1R$.
\end{thm}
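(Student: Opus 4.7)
The natural strategy is to reduce Theorem~\ref{Theorem 3} to Theorem~\ref{Theorem 2} by showing that, in dimension $n=2$, surjectivity of $h$ together with the topology of $2$-surfaces already forces the two hypotheses of Theorem~\ref{Theorem 2}, namely that $h$ is an isomorphism and that $\pi_2(T)=0$. The case $H_1(F,Z_2)=0$ is handled immediately by Theorem~\ref{Theorem 1}, so we may assume $H_1(F,Z_2)\ne 0$. Since $\mathbb{RP}^2$, and indeed any closed non-orientable surface, fails to embed in $\R^3$, this forces $F$ to be orientable of genus $g\ge 1$; its universal cover is $\R^2$, so $\pi_2(F)=0$ and $\pi_1(F)$ is a non-trivial surface group.

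For the main step I would invoke the $3$-manifold techniques of Papakyriakopoulos. The sphere theorem shows that any non-trivial class in $\pi_2(T)$ is represented by an embedded $2$-sphere $S\subset T$; using that $F$ is aspherical and that $h$ is surjective, one argues that $S$ must bound a ball in $T$, contradicting essentiality, and hence $\pi_2(T)=0$. Similarly, if $\ker h$ were non-trivial, the loop theorem would supply an essential simple closed curve $\gamma\subset F$ bounding an embedded disk $D\subset T$. Compressing $F$ along $D$ produces a surface $F'$ of strictly smaller genus that still satisfies the hypotheses of Theorem~\ref{Theorem 3}, with $h$ still onto because the generator killed by the compression already lay in $\ker h$. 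Iterating, the process terminates either when $h$ becomes injective, so that Theorem~\ref{Theorem 2} applies directly, or when $H_1(F',Z_2)=0$, so that Theorem~\ref{Theorem 1} applies; either way $\kappa(F')\ge\kappa_1 R$, and one verifies $\kappa(F)\ge\kappa(F')$ since compressing never shrinks the region enclosed by $F$.

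The principal obstacle will be geometric control of the compression step: one must ensure that the surgered surface $F'$ can be smoothed so as still to belong to the class $F_R$, and that the compressing disk $D$ can be chosen so as not to intersect the large inscribed ball whose radius we are trying to bound below. The dimensional hypothesis $n=2$ is essential, since neither the loop theorem nor the sphere theorem is available in the form needed for higher $n$; this is precisely why in Theorem~\ref{Theorem 2} the conditions that $h$ be an isomorphism and that $\pi_2(T)=0$ must be assumed outright rather than deduced from surjectivity alone.
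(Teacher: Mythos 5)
Your proposal takes a genuinely different route from the paper, and it has a gap that is fatal as written. The paper does not try to upgrade surjectivity of $h$ to the hypotheses of Theorem~\ref{Theorem 2}. Instead it passes to the universal cover $\tilde T^3$ and re-examines the six conditions of Lemma~\ref{Lemma 19} there, observing that when $n=2$ almost all of them are automatic: conditions 2) and 4) were used in Lemma~\ref{Lemma 19} only to secure orientability of the $\tilde Z^{n-1}_j$, which is trivial for $1$-manifolds; condition 6) was used only to exclude manifolds of the third class via Lemma~\ref{Lemma 11}, but when $n=2$ such a $\tilde Z^1_j$ must be a simple closed curve, hence a \emph{finite} $1$-cycle, and its exclusion already follows from $H^{fin}_1(\tilde T^3, J)=0$; conditions 3) and 5) hold because $\tilde T^3$ is simply connected; and only condition 1), connectedness of $\tilde F^n$, requires an input about $h$, namely ontoness, via the first half of the proof of Lemma~\ref{Lemma 20}. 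This is an essentially local/homological observation that costs almost nothing and never touches the geometry of $F$.

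The gap in your compression scheme is the one you flagged yourself, and it cannot be waved away: after surgering $F$ along the compressing disk $D$ you have no control over principal curvatures near the surgery locus, so $F'$ need not lie in $F_R$, and the entire framework (central set, multiplicity bounds, Lemma~\ref{Lemma 2}) becomes inapplicable to $F'$. Moreover the claimed monotonicity $\kappa(F)\ge\kappa(F')$ is not established and is genuinely delicate: compressing along $D$ splits $T$ into pieces, and the inscribed ball of radius $\kappa(F)$ may sit on the ``wrong'' side of $D$, or straddle it. A second, smaller point: your derivation of $\pi_2(T)=0$ does not really use asphericity of $F$ or surjectivity of $h$; what you actually need is that any embedded $2$-sphere in $T\subset E^3$ bounds a ball in $E^3$ by Alexander's theorem, and one checks that this ball must lie in $T$ because $F$ is connected and compact, so $T$ is irreducible and the sphere theorem gives $\pi_2(T)=0$. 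That piece can be made correct, but it does not rescue the loop-theorem reduction, whose geometric step is missing. The paper's argument avoids all of these issues by never modifying $F$ at all.
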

From Theorems \ref{Theorem 1}, \ref{Theorem 2}, in combination with corresponding examples:
\begin{thm}\label{Theorem 4} $\kappa(\mathcal{S}) \ge \kappa_1R$; $\kappa(\mathcal{H}^0_k)
\ge \kappa_1R$; $\kappa(Tr^0) \ge \kappa_1R$.

For $n=2$ these inequalities reduce to equality.
\end{thm}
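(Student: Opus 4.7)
The three inequalities follow from Theorems \ref{Theorem 1}--\ref{Theorem 3} once the topological hypotheses are verified in each class. For $\mathcal{S}$ this is immediate: $H_1(S^n,\mathbf Z_2)=0$ for $n\ge 2$, so Theorem \ref{Theorem 1} applies to every $F\in\mathcal{S}$. For $\mathcal{H}^0_k$ one takes $T=h^{n+1}_k$, which is homotopy equivalent to a wedge of $k$ circles and hence aspherical, giving $\pi_2(T)=0$. When $n\ge 3$ the boundary $F=S^n_k=\#_k(S^{n-1}\times S^1)$ has free fundamental group of rank $k$, and the inclusion $F\hookrightarrow T$ is an isomorphism on $\pi_1$, so Theorem \ref{Theorem 2} applies. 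When $n=2$ the inclusion of the closed genus-$k$ surface is only surjective on $\pi_1$, and one appeals to Theorem \ref{Theorem 3}. For $Tr^0$ the argument is identical with $T=D^n\times S^1$: $\pi_2(T)=0$ and $\pi_1(T)=\mathbf Z$, the inclusion is a $\pi_1$-isomorphism for $n\ge 3$ (Theorem \ref{Theorem 2}) and surjective for $n=2$ (Theorem \ref{Theorem 3}).

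For the equality in dimension $n=2$ the plan is to exhibit, for every $\epsilon>0$, a surface in each class with maximal interior ball of radius $\kappa_1 R+\epsilon$. The geometric model is the cusp cavity formed by four balls of radius $R$ whose centers lie at the vertices of a regular tetrahedron of edge $2R$: since the circumradius of this tetrahedron is $R\sqrt{3/2}$, the ball inscribed in the cusp has radius exactly $R\sqrt{3/2}-R=\kappa_1 R$. I would take the four balls slightly separated, so that the cavity has radius $\kappa_1 R+\epsilon$, and then take the boundary of the cusp region, smoothed at the curves where the four spherical boundary pieces meet, as the basic surface. A single tetrahedral cusp is bounded by a topological $S^2$ and so gives a surface in $\mathcal{S}$; a closed ring of such cusps arranged around a central axis gives a torus in $Tr^0$; and an analogous array of $k$ cusp cavities linked in a handlebody pattern gives a surface in $\mathcal{H}^0_k$. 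The explicit constructions belong in \S\ref{sec:applications}.

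The main obstacle is verifying that the smoothing operations can be carried out in $C^2$ with every principal radius remaining $\ge R$, while guaranteeing that no larger interior ball appears in the smoothed or connecting regions. Since the bound is tight at the tetrahedral cusp, the smoothing along the edges where three spherical faces meet must introduce no slackening of the curvature bound, and the corridors joining multiple cusps (in the handlebody and toroidal cases) must themselves be kept thinner than the central cavity. This balance between topological flexibility and sharp curvature control is the delicate part of the construction.
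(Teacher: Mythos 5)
Your arguments for $\mathcal{S}$ and for $\mathcal{H}^0_k$ match the paper's own proof: $\mathcal{S}$ goes by Theorem~\ref{Theorem 1}, and $\mathcal{H}^0_k$ goes by Theorem~\ref{Theorem 2} for $n\ge 3$ (noting that $h^{n+1}_k$ deformation retracts onto a wedge of $k$ circles, which is aspherical, and that the inclusion of $S^n_k=\#_k(S^{n-1}\times S^1)$ induces a $\pi_1$-isomorphism) and by Theorem~\ref{Theorem 3} for $n=2$. The sketch of the tetrahedral-cusp examples for equality at $n=2$ is consistent with what the paper announces; the paper explicitly defers those constructions to Part~2, so no more is expected here.

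The $Tr^0$ case, however, contains a genuine error. You take the toroidal ring to be $T=D^n\times S^1$, but that is exactly $h^{n+1}_1$; the paper defines $Tr^{n+1}=K^2\times\,$(a closed $(n-1)$-manifold) -- the universal cover being $K^2\times E^{n-1}$ shows the intended space is $D^2\times T^{n-1}$ (or at any rate has $K^2$, not $K^n$, as the disk factor). With the correct identification, $F=S^1\times T^{n-1}$ and the inclusion $\pi_1(F)\cong\mathbf{Z}^n\to\pi_1(T)\cong\mathbf{Z}^{n-1}$ kills the boundary circle of the $D^2$ factor and is \emph{not} injective, so Theorem~\ref{Theorem 2} cannot be applied for $n\ge 3$. (The same failure occurs if one reads $Tr^{n+1}=D^2\times S^{n-1}$, where $\pi_1(T)=0$ for $n\ge 3$.) This is precisely why the paper does not deduce the $Tr^0$ bound from Theorem~\ref{Theorem 2} or~\ref{Theorem 3} when $n\ge 3$: in part~(c) of its proof it passes to the universal cover $\tilde T^{n+1}\cong K^2\times E^{n-1}$ with $\tilde F^n\cong S^1\times E^{n-1}$ and verifies the hypotheses of Lemma~\ref{Lemma 19} directly. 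The delicate point there is that condition~4) of Lemma~\ref{Lemma 19} fails ($H^{fin}_1(\tilde F^n,J_2)\ne 0$), and the paper replaces it by an ad hoc orientability argument for the submanifolds $\tilde F^{n-1}_j\subset\tilde F^n$, built from intersection theory with the generating cycle $z^{n-1}=P\times E^{n-1}$. Your proposal omits this entire argument, so the $Tr^0$ inequality is not established for $n\ge 3$.

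Only for $n=2$ do your $Tr^0$ and the paper's agree, and then trivially so, since as the paper remarks $Tr^0=\mathcal{H}^0_1$ in that dimension and Theorem~\ref{Theorem 3} applies.
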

Examples proving the second assertion of Theorem \ref{Theorem 4} will be
constructed in the second part of this work; Theorems \ref{Theorem 1} -- \ref{Theorem 3}
and the first assertion of Theorem \ref{Theorem 4} are proved in this first part.

Sharp bounds in Theorem \ref{Theorem 4} for $n>2$ are unknown. We note that
$\kappa(\mathcal{H}_k) = \kappa_0R, \ k=1,2,\cdots$, cf. \cite{L3},
Introduction. This shows that a surface homeomorphic to a
sphere with $k$ handles and bounding a solid ``sufficiently
correctly'' in a topological sense contains a ball of radius
$\kappa_1R$; but surfaces can be constructed for which the
topological type of the body bounded is ``incorrect'', which
contain only balls of radius differing from $\kappa_0R$ by an
arbitrarily small amount. The ``critical numbers'' $\kappa_0$
and $\kappa_1$ have a simple geometrical meaning: $\kappa_0R$
is the radius of the greatest circle in the plane included between
three tangent circular arcs of radius $R$; $\kappa_1R$ is the
radius of the greatest ball in $E^3$ included between four
tangent spheres of radius $R$.

 In \cite{LF} the equation $\kappa(\mathcal{S}) = \kappa_1R$ was published
for the case $n=2$ of Theorem \ref{Theorem 3}; there also was indicated the
possibility of generalizing these results for any $n$.

In this work we depend on geometric methods developed in
\cite{L3} and in a series of results of the work \cite{L3} assumed
to be known.

\S \ref{sec:geom} is carried out with a purely geometric character. In it is
established Lemma \ref{Lemma 2}, from which the proofs of our theorems
upon establishing that the multiplicity of the central set $Z$
is greater than three (cf. \cite[p. 225 (3:5)]{L3}). As proved in
\cite{L3}, the multiplicity of $Z$ must be greater than 2;
consequently, it remains to obtain conditions on $F$ and $T$
excluding multiplicity 3 and that it then follows that $\kappa(F)
\ge \kappa_1R$. In \S \ref{sec:local} the local structure of $Z$ is studied under
the assumption that the multiplicity is 3. In \S \ref{sec:triang} it is proved
that $Z$ (in the case of multiplicity 3) has a topological
structure defined there and called a 3-complex.

\medskip
\hrule\smallskip
\noindent p.~147.
\medskip

In \S\S \ref{sec:cover} -- \ref{sec:graph} the topological properties of a 3-complex are
studied, abstracting from the fact that a 3-complex is a
central set of $T$; in these paragraphs only the basic topological
properties of the configuration $\{ F, T, Z \}$ are used which
are recounted at the beginning of \S \ref{sec:triang}. In the results it is
clarified what the topological conditions are needed on $F,T$
in order that a 3-complex $Z$ will fail to exist (Lemma \ref{Lemma 16}).
The proof of our theorems are completed in \S \ref{sec:proofs} by combining
the results of \S \ref{sec:triang} (cf. above) with Lemma \ref{Lemma 16}.

\section{Geometrical lemmas}\label{sec:geom}
\noindent 1. Let $g_1, \cdots, g_k$ be unit vectors in $E^{n+1}$,
$\beta = $ the minimum angle between pairs of them, and
$\alpha^{n+1}(k)$ the supremum of such $\beta$ (cf. \cite[p. 226]{L3}). We need
\begin{lem}\label{Lemma 1} $\alpha^{n+1}(4) = 2 \csc^{-1}\sqrt{3/2}$.
(Equality with $\beta$ occurs for the vectors which go from the
center of a regular tetrahedron to its vertices.)
\end{lem}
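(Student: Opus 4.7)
The plan is to exploit the elementary identity $|g_1+g_2+g_3+g_4|^2 \ge 0$. Expanding the square in terms of the pairwise inner products gives
$$0 \le \left|\sum_{i=1}^4 g_i\right|^2 = 4 + 2\sum_{i<j} g_i\cdot g_j = 4 + 2\sum_{i<j}\cos\theta_{ij},$$
where $\theta_{ij}$ denotes the angle between $g_i$ and $g_j$. Hence $\sum_{i<j}\cos\theta_{ij} \ge -2$ for any choice of four unit vectors.

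Now let $\beta$ be the minimum pairwise angle. Then $\theta_{ij} \ge \beta$ for all $\binom{4}{2}=6$ pairs, and since $\cos$ is strictly decreasing on $[0,\pi]$ we get $\cos\theta_{ij} \le \cos\beta$. Summing over the six pairs yields $6\cos\beta \ge \sum_{i<j}\cos\theta_{ij} \ge -2$, i.e., $\cos\beta \ge -1/3$, so $\beta \le \arccos(-1/3)$. The identity $\cos(2\arcsin x) = 1-2x^2$ with $x=\sqrt{2/3}$ gives $\arccos(-1/3) = 2\arcsin\sqrt{2/3} = 2\csc^{-1}\sqrt{3/2}$, which establishes the upper bound $\alpha^{n+1}(4) \le 2\csc^{-1}\sqrt{3/2}$.

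For the matching lower bound I would exhibit the tetrahedral configuration: since $n+1 \ge 3$, a regular tetrahedron can be inscribed in the unit sphere of $E^{n+1}$ with centroid at the origin. Taking $g_1,\dots,g_4$ to be the position vectors of the vertices, symmetry forces $\sum g_i = 0$, so every inequality in the chain above is an equality; this forces every $\cos\theta_{ij}$ to equal the common value $-1/3$, so each pairwise angle is exactly $2\csc^{-1}\sqrt{3/2}$, attaining the bound.

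There is no real obstacle in this argument — the whole proof is a one-line application of $|\sum g_i|^2 \ge 0$, combined with monotonicity of $\cos$ and the routine trigonometric identification of $\arccos(-1/3)$ with $2\csc^{-1}\sqrt{3/2}$. The only mild subtlety worth stating explicitly is that the inequality $\cos\theta_{ij} \le \cos\beta$ must be applied with $\beta$ possibly obtuse, which is legitimate as long as $\beta \le \pi$ — automatic here since $\beta$ is an angle between unit vectors.
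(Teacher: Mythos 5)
Your proof is correct. The bound $\cos\beta\ge -1/3$ from expanding $\bigl|\sum_{i=1}^4 g_i\bigr|^2\ge 0$, the trigonometric identification $\arccos(-1/3)=2\csc^{-1}\sqrt{3/2}$, and the tetrahedral configuration (available since $n+1\ge 3$) for sharpness together establish $\alpha^{n+1}(4)=2\csc^{-1}\sqrt{3/2}$. The paper attributes its argument to Reshetnyak but the translator omits the details, so a line-by-line comparison is not possible; your $|\sum g_i|^2\ge 0$ argument is, however, the standard elementary route to this spherical-packing bound and very plausibly coincides with what appeared in the original.
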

[A proof due to Reshetnyak is given.]

\noindent 2. Let $F^n \in F_R$ bound a body $T^{n+1}$; we designate by $Z$
the central set [cut locus] of $F^n$ \cite[p. 224]{LF}. In the
following it is assumed everywhere that $F^n$ is a flattened
surface, and consequently, $Z$ has multiplicity $>2$ (cf. \cite[pp. 206, 225]{L3}). Such assumptions do not limit the generality of
considerations, since for nonflattened $F^n$ the results of this
work are evident, but for flattened ones the multiplicity of
$Z$ is $>2$. (\cite[pp. 231-232]{L3}).
\begin{lem}\label{Lemma 2} If the multiplicity of $Z$ is $>3$, then
$F^n$ contains a sphere of radius $\kappa_1R$.
\end{lem}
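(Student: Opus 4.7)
The plan is to pick a point $z \in Z$ of multiplicity at least $4$, associate to its four closest points $p_1,p_2,p_3,p_4 \in F^n$ the inward unit normals $g_i = (z - p_i)/\rho \in E^{n+1}$ (where $\rho = |z - p_i|$ is the radius of the inscribed ball $B(z,\rho) \subset T$), apply Lemma~\ref{Lemma 1} to the four unit vectors $g_i$, and combine the resulting angular bound with a pairwise geometric inequality to conclude $\rho \ge \kappa_1 R$. Lemma~\ref{Lemma 1} produces indices $i \ne j$ whose inward normals meet at an angle $\theta = \theta_{ij}$ satisfying $\theta \le \alpha^{n+1}(4) = 2\csc^{-1}\sqrt{3/2}$, equivalently $\sin(\theta/2) \le \sqrt{2/3}$.

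The main step is the pairwise inequality
$$
\sin(\theta_{ij}/2) \;\ge\; \frac{R}{R + \rho},
$$
valid for any two contact points $p_i, p_j$ of the same inscribed ball. The natural way to obtain it is to introduce the outward osculating $R$-balls $B_i^+ = B(p_i - Rg_i,\,R)$ and $B_j^+ = B(p_j - Rg_j,\,R)$. Each is externally tangent to $B(z,\rho)$ at its respective contact point, and each lies in the open complement of $T$: the $F_R$-condition gives reach $\ge R$ on the exterior side of $F$, so no point of $F$ can lie in the open interior of $B_i^+$, and since that interior is connected and contains the center $c_i^+ = z - (R+\rho)g_i \in E^{n+1}\setminus T$, the whole open ball lies outside $T$. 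One then argues that $B_i^+$ and $B_j^+$ have disjoint open interiors; granting this, $|c_i^+ - c_j^+| \ge 2R$, and the identity
$$
c_i^+ - c_j^+ \;=\; -(R+\rho)(g_i - g_j),
\qquad
|c_i^+ - c_j^+| \;=\; 2(R+\rho)\sin(\theta_{ij}/2),
$$
delivers the inequality.

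Combining the two bounds for the pair supplied by Lemma~\ref{Lemma 1} yields $R/(R+\rho) \le \sqrt{2/3}$, i.e.\ $\rho \ge R(\sqrt{3/2}-1) = \kappa_1 R$; the inscribed ball $B(z,\rho) \subset T$ is then the sphere of radius $\ge \kappa_1 R$ asserted by the lemma.

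The hard part is the disjointness of the two outward osculating $R$-balls. This is where the global $F_R$-structure, and not merely the pointwise curvature bound, must be used; intuitively, an overlap of $B_i^+$ and $B_j^+$ would force the boundary of their union (which $F$ touches tangentially at $p_i$ and $p_j$) to be penetrated by $F$ in a way incompatible with reach $\ge R$ and the $C^2$ hypothesis, but making this precise is the geometric crux. The extremal configuration --- four pairwise tangent $R$-spheres in a regular tetrahedral arrangement, with their common hole housing a ball of radius exactly $\kappa_1 R$ --- exhibits equality in every step and explains why $\kappa_1$ is the sharp constant.
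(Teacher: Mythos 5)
Your proposal takes exactly the approach the paper indicates: at a multiplicity-$\ge 4$ point $z$ of $Z$ one forms the four inward unit normals $g_i$ at the contact points, invokes Lemma~\ref{Lemma 1} to find a pair at angle $\theta\le 2\csc^{-1}\sqrt{3/2}$, and closes with the pairwise inequality $\sin(\theta/2)\ge R/(R+\rho)$, yielding $\rho\ge(\sqrt{3/2}-1)R=\kappa_1R$; your arithmetic and the identification of the osculating-ball centers $c_i^+=z-(R+\rho)g_i$ are correct. The one step you flag as incomplete---the disjointness of the open outward osculating $R$-balls $B_i^+,B_j^+$, equivalently the pairwise bound $\sin(\theta_{ij}/2)\ge R/(R+\rho)$---is indeed the substantive geometric input, but the paper does not reprove it here either: it belongs to the machinery of~\cite{L3}, on which the Introduction says the present paper depends and whose results are ``assumed to be known'' (it is the same fact that, combined with $\alpha^{n+1}(3)=2\pi/3$, yields the earlier bound $\kappa_0R$ in~\cite{L3}). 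Your observation that each $B_i^+\subset E^{n+1}\setminus T$ is correct but by itself does not give disjointness, and a one-step reach argument will not close that gap; the right move is to cite the relevant lemma of~\cite{L3} for $\sin(\theta_{ij}/2)\ge R/(R+\rho)$, after which your proof is complete and matches the paper's.
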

[In the proof Lemma \ref{Lemma 1} is applied to 4 unit vectors going from a
point of multiplicity $\ge 4$ along lines normal to $F^n$.]

\medskip
\hrule\smallskip
\noindent p.~148.
\medskip

\section{Local structure of the central set}\label{sec:local}
[Standard properties of the cut locus of multiplicity 3 are
developed. Cf. Ozols paper on that subject. They are the properties
abstracted as a normally imbedded 3-complex in \S \ref{sec:triang}.]
\section{Triangulations and 3-complexes}\label{sec:triang}

\medskip
\hrule\smallskip
\noindent p.~152.
\medskip

\noindent 1. Continuing we will need some properties of $Z$ shared with
$\tilde Z$, the covering of $Z$ in the universal covering
$\tilde T$ of $T$. To avoid repetition and provide convenient
reference we formulate a {\em 3-complex $Z^n$ in} $T^{n+1}$ as
satisfying:
\begin{description}
\item[1)] $Z^n$ is an $n$-dimensional locally finite polyhedron,
triangulated by $\tau$.
\item[2)] $Z^n$ contains a subcomplex $Z^{n-1}$, decomposed
into a finite or countable nonoverlapping union of
$(n-1)$-manifolds $Z^{n-1}_i$.
\item[3)] $Z^n\setminus Z^{n-1}$ is a finite or countable
union of $n$-dimensional manifolds.
\item[4)] Each $(n-1)$-simplex of $Z^{n-1}$ is a face of
exactly 3 $n$-simplices of $Z^n$.
\end{description}

We say that the 3-complex $Z^n$ is {\em normally imbedded in
$T^{n+1}$} if 5)--12) as follows hold.
\begin{description}
\item[5)] $T^{n+1}$ is an $(n+1)$-manifold with boundary $F^n$.
($T^{n+1}$ is not generally compact, $F^n$ not generally
connected.)
\item[6)] $Z^n$ is a closed subset of $T^{n+1}\setminus F^n$.
\item[7)] The triangulation $\tau$ is extended to one of
$T^{n+1}$, $\tau^0$, for which $F^n$ is a subcomplex.
\item[8)] $Z^n$ is a deformation retract of $T^{n+1}$ by
$\varphi_t : T^{n+1} \to T^{n+1}$ with $\varphi_1 : T^{n+1} \to
Z^n$ and $\varphi = \varphi_1|F^n$ is simplicial.
\item[9)] $F^n$ is a deformation retract of $T^{n+1} \setminus
Z^n$ by $\psi_t: T^{n+1} \setminus Z^n \to T^{n+1} \setminus Z^n$,
deforming the identity $\psi_0$ to $\psi_1 = \psi : T^{n+1}
\setminus Z^n \to F^n$.
\item[10)] $\varphi$ is a 2-fold covering on $\varphi^{-1}(Z^n
\setminus Z^{n-1})$.
\item[11)] $\varphi$ is a 3-fold covering on $\varphi^{-1}(Z^{n-1})$.
Moreover, each $Q \in Z^{n-1}$ has a neighborhood $W$ such that
[$Z^n \cap W$ is a triad bundle over $Z^{n-1}$].
\item[12)] The $Z^n$-star (closed) of each vertex $Q$ in $Z^n$
belongs to a neighborhood $W(Q)$ evenly covered by $\varphi$
(cf. 10)) if $Q \in Z^n \setminus Z^{n-1}$, or decomposed as in
11) if $Q \in Z^{n-1}$. In each component of $Z^n \setminus
Z^{n-1}$ there is at least one vertex $Q$ for which the closed
star doesn't meet $Z^{n-1}$.
\end{description}

\medskip
\hrule\smallskip
\noindent p.~153.
\medskip

\noindent {\em Remark.} The properties are not independent; for
example 4) follows from 11).

\noindent 2.

\begin{lem}\label{Lemma 4} If the central set $Z^n$ of a body
$T^{n+1}$ of $(n+1)$-dimensional Euclidean space, bounded by a
surface $F^n \in F^n_R$, has multiplicity 3, then $Z^n$ is a
3-complex normally imbedded in $T^{n+1}$.
\end{lem}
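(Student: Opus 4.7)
The plan is to assemble the local structure results of \S\ref{sec:local} into the global properties 1)--12). Throughout, write $d : T^{n+1} \to \R$ for the distance to $F^n$, and recall that $Z^n$ is the locus where $d$ fails to be smooth, equivalently the set of points of $T^{n+1}$ having two or more closest points on $F^n$. Stratify by multiplicity: let $Z^{n-1}$ denote the set of points of multiplicity exactly $3$ (we are assuming multiplicity $\le 3$) and $Z^n \setminus Z^{n-1}$ the set of multiplicity-$2$ points. Under the multiplicity-$3$ hypothesis, the local analysis of \S\ref{sec:local} (cf.\ the Ozols-type normal form for the cut locus) gives, around each $P \in Z^n \setminus Z^{n-1}$, a chart in which $Z^n$ is a smooth $n$-disk, and, around each $Q \in Z^{n-1}$, a chart in which $Z^n$ looks like a triad bundle: three smooth $n$-half-disks meeting along a common smooth $(n-1)$-disk. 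This immediately yields 2), 3), and 4) (each $(n-1)$-simplex in $Z^{n-1}$ bounds exactly three $n$-simplices, one per sheet).

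Next I would build the triangulation. Property 5) is immediate because $F^n$ is $C^2$, and 6) is immediate because $Z^n$ lies at positive distance from $F^n$ and is closed. For 1) and 7), I would choose the triangulation $\tau^0$ of $T^{n+1}$ fine enough that every closed simplex lies inside one of the standard local charts above; the local standard forms make $Z^n$ and $F^n$ into subcomplexes, giving 1) and 7). Local finiteness is automatic since $T^{n+1}$ is second countable and the charts can be taken to form a locally finite cover.

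For 8) and 9), use the foot-of-perpendicular map already exploited in \cite{L3}. Push each $x \in T^{n+1}\setminus Z^n$ along its (unique) shortest segment away from $F^n$ until it strikes $Z^n$; combining this with the identity on $Z^n$ defines $\varphi_t$. After possibly subdividing $\tau^0$, the map $\varphi = \varphi_1|_{F^n}$ is simplicial: each open $n$-face of $F^n$ lies in a chart where $\varphi$ is a diffeomorphism onto an open $n$-face of $Z^n$, and the incidences match across $Z^{n-1}$ by the triad picture. The reverse flow $\psi_t$, sliding points of $T^{n+1}\setminus Z^n$ along the shortest segment toward $F^n$, gives the deformation retraction of 9); continuity holds because the shortest-foot map is continuous off $Z^n$.

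Finally, for 10)--12), observe that at a multiplicity-$2$ point $P$ the preimage $\varphi^{-1}(P)$ consists of two points of $F^n$, and on a chart around $P$ the map $\varphi$ splits into two sheets, yielding 10) and the first half of 12). At a multiplicity-$3$ point $Q \in Z^{n-1}$ the preimage has three points and, by the local triad form, a neighborhood $W$ of $Q$ decomposes so that $\varphi|_{\varphi^{-1}(Z^n \cap W)}$ is a triad bundle over $Z^{n-1}\cap W$, giving 11) and the second half of 12). The last clause of 12), that each component of $Z^n \setminus Z^{n-1}$ contains a vertex whose closed star avoids $Z^{n-1}$, is achieved by barycentrically subdividing $\tau$ sufficiently many times inside that component (which is an open $n$-manifold) so that at least one barycenter lies well away from the boundary stratum. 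The main obstacle I expect is the coordination step making $\varphi = \varphi_1|_{F^n}$ genuinely simplicial while preserving the triad structure at $Z^{n-1}$: this is where the local charts from \S\ref{sec:local} must be glued carefully with the chosen triangulation, and where the hypothesis that multiplicity equals $3$ (rather than merely $\le 3$) is genuinely used to rule out higher-order strata that would spoil the triad model.
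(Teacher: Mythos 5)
Your outline follows essentially the same route as the paper's (very terse) proof, which simply says to build the triangulation by the methods of Whitney's \emph{Geometric Integration Theory} and notes that the remaining verifications are set up by the local analysis of \S\ref{sec:local}. Where you go astray is the triangulation step itself. You write that one may ``choose the triangulation $\tau^0$ of $T^{n+1}$ fine enough that every closed simplex lies inside one of the standard local charts'' and that ``the local standard forms make $Z^n$ and $F^n$ into subcomplexes.'' That inference does not hold: a generic fine triangulation of $T^{n+1}$ will cut across $Z^n$ arbitrarily, no matter how small the simplices are, so fineness alone never produces a subcomplex. The correct order is reversed: first triangulate $Z^n$ (using the PL product structure given by the local disk and triad charts, which is exactly where the $C^2$ smoothness and the multiplicity bound are consumed), then pull this triangulation back through the two-to-one and three-to-one covering $\varphi$ to obtain a compatible triangulation of $F^n$ on which $\varphi$ is simplicial by construction, and finally extend to a triangulation $\tau^0$ of $T^{n+1}$ using the retraction structure. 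This is precisely the kind of relative triangulation of a smooth stratified pair that Whitney's method is cited to provide; ``after possibly subdividing $\tau^0$'' one cannot make the already-defined $\varphi$ simplicial (simplicial approximation only gives a homotopic map), which is why the triangulations of $F^n$ and $Z^n$ must be coordinated from the start rather than repaired afterward. You do flag this coordination as ``the main obstacle,'' and you are right that it is, but that means the key technical content of the lemma is left as a gap rather than discharged; the rest of your argument (the local triad/double-sheet picture yielding 2)--4) and 10)--12), the foot-of-perpendicular flows for 8) and 9), and fineness for the last clause of 12)) is sound and matches the intended proof.
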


For the triangulation use the methods of Whitney \cite[pp. 175-191]{W}.

\medskip
\hrule\smallskip
\noindent p.~154.
\medskip

The rest of the proof has been set up by the preceding material.

In the continuation the triangulation of $T^{n+1}$ is assumed
to extend triangulations of $Z^n, F^n$ so that $\varphi : F^n
\to Z^n$ is simplicial.

\noindent 3. We construct for the polyhedron $T^{n+1}$ of part 2 the
universal covering $\kappa : \tilde T^{n+1} \to T^{n+1}$.
$\tilde F^n = \kappa^{-1}(F^n)$ is the boundary of the manifold
$\tilde T^{n+1}$, $\tilde Z^n = \kappa^{-1}(Z^n)$ is the universal
covering of $Z^n$, and the deformation retracts $\varphi_t,
\psi_t$ can be lifted.

\medskip
\hrule\smallskip
\noindent p.~155.
\medskip

The triangulation can be lifted too, so
\begin{lem}\label{Lemma 5} $\tilde Z^n$ is a 3-complex normally
imbedded in $\tilde T^{n+1}$.
\end{lem}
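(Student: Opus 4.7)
The plan is to verify properties 1)--12) for $\tilde Z^n \subset \tilde T^{n+1}$ one by one, using that $\kappa$ is a covering map (so in particular a local homeomorphism) and that homotopies lift uniquely once an initial lift is given. Most of the content is local and transfers for free; the only substantive steps are lifting the two deformation retracts and bookkeeping at the multiplicity-3 stratum.

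First I would lift the triangulation. Since each simplex of $\tau^0$ is simply connected, $\kappa$ is trivial over a neighborhood of each simplex, so one defines $\tilde\tau^0$ to consist of all lifts of simplices of $\tau^0$; this is a locally finite triangulation of $\tilde T^{n+1}$ in which $\tilde Z^n$ and $\tilde F^n$ appear as subcomplexes. Writing $\tilde Z^{n-1} = \kappa^{-1}(Z^{n-1}) = \bigcup_i \kappa^{-1}(Z^{n-1}_i)$ and using that the preimage of a manifold under a local homeomorphism is a manifold, we get properties 1)--3), 6) and 7); property 5) is the classical statement that the universal cover of a manifold with boundary is a manifold with boundary; and 4) is local, hence transfers.

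Next I would lift the two deformation retracts. Because $\varphi_0 = \mathrm{id}$, the homotopy lifting property yields a unique $\tilde\varphi_t \colon \tilde T^{n+1} \to \tilde T^{n+1}$ with $\tilde\varphi_0 = \mathrm{id}$ and $\kappa\circ\tilde\varphi_t = \varphi_t\circ\kappa$. Uniqueness of lifts, applied to $\varphi_1$ (which lands in $Z^n$) and to the restriction of $\varphi_t$ to $Z^n$ (which ends at the identity), forces $\tilde\varphi_1$ to land in $\tilde Z^n$ and to fix $\tilde Z^n$ pointwise, giving 8); the restriction $\tilde\varphi = \tilde\varphi_1|_{\tilde F^n}$ is simplicial in the lifted triangulations because $\varphi$ is and $\kappa$ is a simplicial local homeomorphism on each simplex. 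Property 9) follows from the same argument applied to $\psi_t$, noting that $\kappa$ restricts to a covering $\tilde T^{n+1}\setminus\tilde Z^n \to T^{n+1}\setminus Z^n$ of the saturated open subset.

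Finally, properties 10)--12) are entirely local near points of $\tilde Z^n$. Because $\kappa$ is a local homeomorphism, the sheets of $\varphi$ near $\kappa(\tilde z)$ lift uniquely through $\kappa$ to sheets near $\tilde z$, so the local multiplicities of $\tilde\varphi$ over points of $\tilde Z^n\setminus\tilde Z^{n-1}$ and $\tilde Z^{n-1}$ are again $2$ and $3$; the trivializing neighborhoods $W$ required in 11) and 12) are obtained by shrinking the ones downstairs so as to lie in a set evenly covered by $\kappa$, then lifting. For the last clause of 12), given a component $\tilde C$ of $\tilde Z^n\setminus\tilde Z^{n-1}$, the set $\kappa(\tilde C)$ lies in some component $C$; taking the distinguished vertex $Q\in C$ whose closed star avoids $Z^{n-1}$ and lifting $Q$ along a path in $\tilde C$ from any vertex of $\tilde C$ yields $\tilde Q\in\tilde C$ whose closed star, being a lift of that of $Q$, avoids $\tilde Z^{n-1}$.

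The main obstacle is the compatibility bookkeeping: making sure that the two lifts $\tilde\varphi_t$ on $\tilde T^{n+1}$ and $\tilde\psi_t$ on $\tilde T^{n+1}\setminus\tilde Z^n$ actually deformation-retract onto $\tilde Z^n$ and $\tilde F^n$ respectively, and that the triadic local model of 11) lifts correctly (rather than picking up a nontrivial monodromy). Both issues are resolved by shrinking the chosen neighborhoods until they are evenly covered and using uniqueness of path- and homotopy-lifts, but they are the only places where one must do more than quote a one-line covering-space fact.
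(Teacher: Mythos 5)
Your proposal carries out in full detail exactly the argument the paper gives in one line: lift the triangulation, the retracts $\varphi_t,\psi_t$, and the local structure through the covering $\kappa$, and check 1)--12) one by one. This matches the paper's approach; the paper simply treats the verification as routine.
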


\section{Coverings in 3-complexes}\label{sec:cover}

\noindent 1. Consider a normally imbedded 3-complex $Z^n \subset T^{n+1}$.
Denote connected components by subscripts: $Z^{n-1}_j, Z^n_i,
F^{n-1}_j, F^n_i$. The closures of the $n$-dimensional ones are
subcomplexes. $F^{n-1} = \varphi^{-1}(Z^{n-1})$, all have
triangulated closures.
\begin{lem}\label{Lemma 6} 
\begin{description}
\item[1)] If $P\in F^n \setminus F^{n-1}$ then
$\varphi(P)$ is a double point of $Z^n$.
\item[2)] If $P \in F^{n-1}$, then $\varphi(P)$ is a triple point.
\item[3)] For each $F^{n-1}_j$, $\varphi(F^{n-1}_j)$ coincides
with some $Z^{n-1}_{k_j}$ and
$\varphi : F^{n-1}_j \to Z^{n-1}_{k_j}$ is a covering.
\item[4)] For each $Z^{n-1}_k$ there is at least on $F^{n-1}_j$
such that $k = k_j$.
\item[5)] If $F^{n-1}_j$ is oriented for all $j$ such that
$k = k_j$, then so is $Z^{n-1}_k$.
\end{description}
\end{lem}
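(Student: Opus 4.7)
The plan is to extract parts 1)--4) as direct consequences of the covering-map structure encoded in conditions 10) and 11) of the normal imbedding definition, and to handle part 5) by a degree-parity argument via the orientation double cover.

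Parts 1) and 2) are essentially restatements of 10) and 11). Since $F^{n-1} = \varphi^{-1}(Z^{n-1})$, a point $P \in F^n \setminus F^{n-1}$ has $\varphi(P) \in Z^n \setminus Z^{n-1}$, whose fiber under $\varphi$ has exactly two points by 10), so $\varphi(P)$ is a double point. A point $P \in F^{n-1}$ has $\varphi(P) \in Z^{n-1}$, whose fiber has exactly three points by 11), making $\varphi(P)$ a triple point.

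For 3) and 4), I would use that the restriction $\varphi|_{F^{n-1}} : F^{n-1} \to Z^{n-1}$ is a $3$-sheeted covering between locally path-connected manifolds. For each component $Z^{n-1}_k$ of the base, the preimage $\varphi^{-1}(Z^{n-1}_k)$ is nonempty (the covering is surjective) and decomposes as a disjoint union of components $F^{n-1}_j$ of $F^{n-1}$ (precisely those with $k_j = k$), each of which is mapped by $\varphi$ onto $Z^{n-1}_k$ as a covering. Non-emptiness of the preimage yields 4); the decomposition and the fact that each component maps as a covering onto the full $Z^{n-1}_k$ yields 3).

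Part 5) is the one nontrivial step and the main obstacle. I would argue by contradiction: suppose $Z^{n-1}_k$ were non-orientable, so its orientation double cover $\widehat{Z^{n-1}_k} \to Z^{n-1}_k$ is a nontrivial $2$-fold covering. For each connected component $F^{n-1}_j$ with $k_j = k$, the orientability of $F^{n-1}_j$ forces the covering $\varphi : F^{n-1}_j \to Z^{n-1}_k$ to factor through $\widehat{Z^{n-1}_k}$ (the image of $\pi_1(F^{n-1}_j)$ must lie in the kernel of the orientation character), and hence to have even degree. But by 11) the degrees of these maps sum to exactly $3$ since $\varphi|_{F^{n-1}}$ is uniformly $3$-fold over $Z^{n-1}_k$, and $3$ is odd; contradiction. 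Thus $Z^{n-1}_k$ is orientable, and orientations can be chosen compatible with those fixed on the $F^{n-1}_j$.
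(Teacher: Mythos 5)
Your proposal is correct and follows the same route as the paper: parts 1)--4) are read off directly from conditions 10) and 11), and part 5) hinges on the parity argument that the degrees of the coverings $F^{n-1}_j \to Z^{n-1}_k$ must sum to $3$, which is odd, so orientability of the $F^{n-1}_j$ forces orientability of $Z^{n-1}_k$. The paper states the odd-degree step directly (one summand must be $1$ or $3$), while you run the contrapositive via the orientation double cover, but the underlying argument is identical.
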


[Of these only 5) seems to need explaining. Since $\varphi :
\varphi^{-1}(Z^{n-1}_k) \to Z^{n-1}_k$ is a 3-fold covering,
the restrictions to components of $\varphi^{-1}(Z^{n-1}_k)$
must be coverings whose multiplicities add up to 3. One of the
multiplicities must be odd (1 or 3), so that $Z^{n-1}_k$ is
oriented.]

\noindent 2. [In this part some combinatorics of simplices are developed.
It is a clumsy but precise way of getting the essential properties
of tubular neighborhoods of the $Z^{n-1}_j$. I believe a better
alternative is to use cells rather than simplices, and adapt the
cells to the local product structure of the triad bundle.]

\medskip
\hrule\smallskip
\noindent p.~157.
\medskip

\noindent 3. [ A similar development is given for tubular neighborhoods of
$Z^n_i$ in $T^{n+1}$.]

\medskip
\hrule\smallskip
\noindent p.~158.
\medskip

\noindent 4. [More combinatorics.]

\medskip
\hrule\smallskip
\noindent p.~159.
\medskip

\noindent 5. [The idea of the holonomy of the triad bundle is pursued
using the combinatorics of the previous parts. A component
$Z^{n-1}_j$ is said to be a {\em manifold of the first class} if
the holonomy is trivial. It is said to be of {\em of the second
class} if the holonomy consists of a group of order 2, so that
two arms of the triad can be transposed and neither is connected
to the third arm. If the holonomy group is transitive on the three
arms, it is said to be {\em of the third class}.

\medskip
\hrule\smallskip
\noindent p.~160. 
\medskip

The finer
classification of the third class into those with holonomy the
alternating subgroup of the three arms and those with holonomy
all permutations of the three arms is not discussed. Probably the
latter is ruled out later by orientability considerations, along
with those of the second class.]

\section{Basic topological lemmas}\label{sec:basic}

\noindent 1. We consider homology groups $H_q(M,G)$ using $G = J$, the
integers, and $G=J_2$, the integers mod 2. For infinite but
locally finite complexes there are further homology theories:
$H^{fin}_q(M,G)$, the homology of finite chains, and
$H^{inf}_q(M,G)$, the homology of infinite chains. The basic
reference is \cite[\S 9]{E}. The symbol $\sim$ is used to
denote ``homologous''.
\begin{lem}\label{Lemma 11} If $H^{inf}_{n-1}(T^{n+1},J) = 0$ and all
$Z^{n-1}_j$ are orientable, then manifolds of the third class
don't exist.
\end{lem}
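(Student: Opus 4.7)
The plan is to argue by contradiction: suppose some $Z^{n-1}_j$ is of the third class, and exhibit a nonzero element of $H^{inf}_{n-1}(T^{n+1},J)$.

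Since $Z^{n-1}_j$ is orientable, fix an orientation and form the fundamental infinite $(n-1)$-cycle $\zeta_j$ supported on $Z^{n-1}_j \subset T^{n+1}$. The hypothesis gives an infinite $n$-chain $\xi$ with $\partial \xi = \zeta_j$. Using the simplicial deformation retraction of property 8), which fixes $Z^n$ pointwise, I replace $\xi$ by $\xi' := \varphi_* \xi$, an $n$-chain supported on $Z^n$ satisfying $\partial \xi' = \varphi_* \zeta_j = \zeta_j$.

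Next I analyze $\xi'$ near a point $p \in Z^{n-1}_j$. By property 11), $Z^n$ in a neighborhood of $p$ is a triad bundle with three $n$-dimensional sheets $s_1, s_2, s_3$ meeting along $Z^{n-1}_j$. Each sheet is a smooth $n$-manifold away from $Z^{n-1}$, and $\partial \xi' = \zeta_j$ forces $\xi'$ to have a locally constant coefficient $b_i \in J$ on $n$-simplices of $s_i$ near $p$, determined by vanishing of $\partial \xi'$ on $(n-1)$-simplices interior to the sheet. Orient each $s_i$ near $p$ so that its boundary orientation on $Z^{n-1}_j$ agrees with $\zeta_j$; this is possible because $Z^{n-1}_j$ is orientable and each $s_i$ is locally a manifold with boundary on $Z^{n-1}_j$. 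Evaluating $\partial \xi' = \zeta_j$ on an $(n-1)$-simplex of $Z^{n-1}_j$ near $p$ then gives
$$
b_1 + b_2 + b_3 = 1.
$$

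Now I use the transitive holonomy. Third class means the holonomy group, a subgroup of $S_3$, acts transitively on the three arms, hence contains a 3-cycle. Choose a loop $\gamma \subset Z^{n-1}_j$ based at $p$ whose holonomy sends $s_1 \mapsto s_2 \mapsto s_3 \mapsto s_1$, and lift $\gamma$ to a continuous arc in $Z^n \setminus Z^{n-1}$ that starts on $s_1$ near $p$ and ends on $s_2$ near $p$. Along this arc the coefficient of $\xi'$ is locally constant by the same interior-vanishing argument, so $b_1 = b_2$; similarly $b_2 = b_3$. Combined with the display above, $3 b_1 = 1$ in $J$, impossible.

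The main obstacle I anticipate is the sign bookkeeping: one must verify that the three sheet orientations at $p$, each pinned down by matching $\zeta_j$ via the manifold-with-boundary structure of $s_i$, transport to themselves under the 3-cycle holonomy without a sign flip, so that the equation $b_1 + b_2 + b_3 = 1$ and the equalities $b_i = b_j$ can be combined as written. This should follow from continuity of the boundary orientation of each sheet along $\gamma$ together with orientability of $Z^{n-1}_j$. A secondary technical point is to choose a sufficiently fine triangulation compatible with the triad bundle structure so that the ``locally constant coefficient'' propagation along the arc and on each sheet is literal.
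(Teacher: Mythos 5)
Your proposal is correct and takes essentially the paper's approach: reduce to $H^{inf}_{n-1}(Z^n,J)=0$ by the deformation retraction, write the fundamental cycle of $Z^{n-1}_j$ as a boundary $\partial c^n$ supported on $Z^n$, and derive the contradiction $3b=1$ by propagating the constant sheet coefficient around the transitive holonomy. The ``sign bookkeeping'' you flag as a potential obstacle is in fact taken care of by the orientability hypothesis itself: the arm permutation acts on the fiber (three half-open intervals, each oriented from the center outward) in an orientation-preserving way, so together with orientability of $Z^{n-1}_j$ the union of the three sheets in a tubular neighborhood, with $Z^{n-1}_j$ removed, is orientable, and the coefficient transports around $\gamma$ without sign change. (The paper's annotation also mentions a ``nonorientable'' case where the holonomy contains a transposition, but under the lemma's hypothesis that all $Z^{n-1}_j$ are orientable this alternative is vacuous.) One small technical difference: you push the bounding chain forward by $\varphi_*$, which requires the retraction to be simplicial/cellular on all of $T^{n+1}$, whereas the paper simply invokes homotopy invariance of $H^{inf}$ to conclude $H^{inf}_{n-1}(Z^n,J)=0$ directly and then chooses $c^n$ in $Z^n$; the latter sidesteps any question about the regularity of $\varphi_1$ off $F^n$.
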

\begin{proof} Since $Z^n$ is a deformation retract of $T^{n+1}$,
we also have that $H^{inf}_{n-1}(Z^n,J)=0$. Since $Z^{n-1}_j$ is
orientable, it is a cycle for a chosen orientation. But then it
must be a boundary in $Z^n$, $Z^{n-1}_j = \partial c^n$ for some
$n$-chain of $Z^n$.

If $Z^{n-1}_j$ is of the third class, then any [$n$-cell] adjacent
to $Z^{n-1}$ [has a coefficient in $c^n$ which must propogate to
adjacent cells in a tubular neighborhood of $Z^{n-1}_j$,
continuing to all of the cells adjacent to $Z^{n-1}_j$. If the
$n$-manifold formed by these cells is orientable, then the
boundary of $c^n$ must have every $(n-1)$-cell of $Z^{n-1}_j$
with coefficient which is a multiple of 3. If the $n$-manifold
is nonorientable, then the holonomy contains a transposition
and the boundary of $c^n$ could not have all $(n-1)$-cells of
the interior of the $n$-manifold cancel, so the boundary could
not be the fundamental class of $Z^{n-1}_j$.]
\end{proof}

\medskip
\hrule\smallskip
\noindent p.~161.
\medskip

\noindent 2.
\begin{lem}\label{Lemma 12} If $T^{n+1}$ and $Z^{n-1}$ are orientable,
then there are no manifolds of the second class.
\end{lem}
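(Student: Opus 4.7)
The approach is to combine the orientations of $T^{n+1}$ and $Z^{n-1}_j$ to induce a cyclic ordering on the three arms of the triad bundle along $Z^{n-1}_j$, and then to observe that the nontrivial element of a second-class holonomy group would reverse this cyclic ordering, yielding a contradiction.

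The starting point is property 11) in the definition of a normally imbedded 3-complex: near each point $Q \in Z^{n-1}_j$ the polyhedron $Z^n$ sits inside $T^{n+1}$ as $U \times Y \subset U \times D^2$, where $U \subset Z^{n-1}_j$ is a neighborhood of $Q$ and $Y \subset D^2$ is a three-pronged graph whose arms correspond to the three local $n$-cells of $Z^n$ meeting along $Z^{n-1}_j$. Choosing global orientations of $T^{n+1}$ and $Z^{n-1}_j$ orients the normal $2$-plane bundle of $Z^{n-1}_j$ in $T^{n+1}$, i.e.\ the disk fibers $D^2$. An oriented disk containing a three-pronged graph has a canonical cyclic ordering of the arms (the counterclockwise order around the central vertex), and by the globality of the chosen orientations this cyclic ordering is consistent on all of $Z^{n-1}_j$ and is preserved by parallel transport around any loop.

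On the other hand, the holonomy group of the triad bundle over $Z^{n-1}_j$ sits as a subgroup of $S_3$ acting on the three arms, and the second-class hypothesis says this subgroup contains a transposition. But any transposition of a three-element set reverses any cyclic ordering, contradicting the conclusion of the previous paragraph. Hence no $Z^{n-1}_j$ is of second class.

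The main technical step to verify carefully is that the orientation of the normal $2$-plane bundle produced from the orientations of $T^{n+1}$ and $Z^{n-1}_j$ really is globally defined on $Z^{n-1}_j$ and is preserved by the triad-bundle transition functions coming from property 11); this is exactly where both orientability hypotheses are used. Once this point is nailed down, the combinatorial conflict between transpositions and cyclic orderings of three elements closes the argument immediately. I expect the rest of the proof to be routine; I also note in passing that the same argument eliminates those third-class components whose holonomy is the full $S_3$ rather than the alternating subgroup, which is presumably the use of orientability hinted at in the remark closing \S\ref{sec:cover}.
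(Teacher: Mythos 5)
Your argument is correct and is essentially the paper's: the translator's bracketed proof says a transposition in the holonomy forces the normal $2$-plane bundle of $Z^{n-1}_j$ to be nonorientable, contradicting the simultaneous orientability of $T^{n+1}$ and $Z^{n-1}_j$, and your cyclic-ordering formulation is just an unpacking of that same mechanism. Your closing remark about ruling out the full-$S_3$ holonomy in the third class also matches the observation the translator makes at the end of \S\ref{sec:cover}.
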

[If the holonomy has a transposition, then the normal bundle of
$Z^{n-1}$ is nonorientable, so just one of $T^{n+1}$ and $Z^{n-1}$
is orientable along the loop giving that transposition.]

\noindent 3. We turn to the study of manifolds of the first class. If
$Z^{n-1}_j$ is of the first class, then a [tubular neighborhood
of $Z^{n-1}$ with $Z^{n-1}$ removed] has three connected
components, the {\em 3 sheets adjacent to $Z^{n-1}_j$}.
[Another Lemma, omitted, formulates this in terms of the combinatorics of
simplices.]

\medskip
\hrule\smallskip
\noindent p.~163.
\medskip

\noindent 4. Designate the 3 sheets in a tubular neighborhood $U$ of
$Z^{n-1}_j$ by $M_{j\alpha}, \alpha = 1,2,3$. The closures of
these $U$ are assumed to be disjoint.
\begin{lem}\label{Lemma 14} Let $Z^{n-1}_j$ be of the first class.
Then
\begin{description}
\item[1)] Two distinct $M_{j\alpha}$'s have no interior points
of $Z^n$ in common. The only component of $Z^{n-1}$ having
points in the closure of $M_{j\alpha}$ is $Z^{n-1}_j$.
\item[2)] Each manifold $Z^n_i$ has at least one point not
belonging to any ${\bar M}_{j\alpha}$.
\item[3)] Each ${\bar M }_{j\alpha}$ belongs to a unique
${\bar Z}^n_i$ and does not have interior points in common
with any other ${\bar Z}^n_k$.
\item[4)] For the $n$-chain mod 2 carried by $M_{j\alpha}$,
also designated by $M_{j\alpha}$, the boundary is
\begin{equation}\label{eq:lem14}\partial M_{j\alpha} = Z^{n-1}_j + z^{n-1}_{j\alpha}
\end{equation}
where $Z^{n-1}_j$ is the fundamental $(n-1)$-cycle mod 2 of
$Z^{n-1}_j$ and $z^{n-1}_{j\alpha}$ is a cycle mod 2, not 0,
and having no common points with $Z^{n-1}$.
\end{description}
\end{lem}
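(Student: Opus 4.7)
The plan is to handle the four assertions in order, with the triad-bundle description of $Z^n$ near $Z^{n-1}_j$ (property 11 together with the triviality of the first-class holonomy) as the main geometric input, and property 12 of a normally imbedded 3-complex as the main combinatorial input.

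For 1), the three sheets $M_{j\alpha}$ are by definition the connected components of $(U\cap Z^n)\setminus Z^{n-1}_j$, so they are pairwise disjoint open subsets of $U$ and share no interior $Z^n$-points. The second half of 1) is forced by the standing assumption that the tubular neighborhoods of distinct components of $Z^{n-1}$ have disjoint closures: $\bar M_{j\alpha}\subset\bar U$ cannot touch $Z^{n-1}_k$ for $k\ne j$. For 2), I would invoke the last sentence of property 12 directly: in each component $Z^n_i$ there is a vertex $Q$ whose closed $Z^n$-star misses $Z^{n-1}$; shrinking the tubular neighborhoods (permissible since $Z^{n-1}$ is closed in $Z^n$) places $Q$ outside every $\bar M_{j\alpha}$. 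For 3), each $M_{j\alpha}$ is connected and contained in $Z^n\setminus Z^{n-1}$, so it lies in a unique component $Z^n_{i(j,\alpha)}$; interior $Z^n$-points of $\bar M_{j\alpha}$ reduce to $M_{j\alpha}$ itself by 1), which is disjoint from every other $Z^n_k$ since the $Z^n_k$ partition $Z^n\setminus Z^{n-1}$.

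For 4), the triad-bundle picture says that each $(n-1)$-simplex $\sigma\subset Z^{n-1}_j$ is a common face of exactly three $n$-simplices of $Z^n$, one from each sheet $M_{j\alpha}$. Hence in the mod-$2$ chain boundary of $M_{j\alpha}$ every such $\sigma$ appears with coefficient one, and the remaining contribution is carried by the outer rim of the tubular neighborhood, disjoint from $Z^{n-1}$; call that chain $z^{n-1}_{j\alpha}$. This produces the displayed formula. That $z^{n-1}_{j\alpha}$ is a mod-$2$ cycle follows from $\partial^{2}=0$ together with the fact that $Z^{n-1}_j$, as a closed $(n-1)$-manifold, is itself a mod-$2$ cycle. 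Nontriviality then follows from 2): if $z^{n-1}_{j\alpha}$ vanished, $M_{j\alpha}$ would have boundary exactly $Z^{n-1}_j$ and would be forced to exhaust its ambient $Z^n_{i(j,\alpha)}$, contradicting the existence of a star-vertex outside every $\bar M_{j\alpha}$.

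The hard part is the last step of 4): verifying nontriviality of the outer-rim chain $z^{n-1}_{j\alpha}$. This is the only point where a global ingredient, property 12 via assertion 2), enters, and the argument must exclude the pathological possibility that $M_{j\alpha}$ closes up inside the tubular neighborhood. Assertions 1)--3) are essentially unwindings of the definitions and of the disjointness conventions already in force.
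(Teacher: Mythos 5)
Your handling of 1)--3) and of the boundary formula in 4) is sound and matches the paper's viewpoint: the translator's remark confirms that once the first-class (trivial holonomy) assumption is unpacked as $\bar M_{j\alpha}\cong Z^{n-1}_j\times[0,1]$, the displayed formula is, in the paper's words, ``geometrically transparent,'' with $z^{n-1}_{j\alpha}$ identified as $Z^{n-1}_j\times\{1\}$.

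Where you diverge is on the nontriviality of $z^{n-1}_{j\alpha}$, and here you misidentify where the content lies. You call this ``the hard part,'' import assertion 2) (hence property 12) as a ``global ingredient,'' and argue contrapositively that $z^{n-1}_{j\alpha}=0$ would force $M_{j\alpha}$ to exhaust $Z^n_{i(j,\alpha)}$. This detour is unnecessary: in the trivial-bundle picture you already used for the boundary formula, $z^{n-1}_{j\alpha}$ is the parallel copy $Z^{n-1}_j\times\{1\}$, a nonempty $(n-1)$-manifold, so as a mod-$2$ chain it carries at least one simplex with coefficient $1$ and is nonzero by inspection. Nontriviality is a purely local consequence of the collar structure, not a global fact, and property 12 has no role in 4). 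Your indirect route is in fact correct, but it also contains an unproved step: from $\partial M_{j\alpha}=Z^{n-1}_j$ you jump to ``$M_{j\alpha}$ exhausts $Z^n_{i(j,\alpha)}$'' without saying why. The mechanism is that absence of an outer rim makes $M_{j\alpha}$ closed in $Z^n_i$ (its only frontier points lie in $Z^{n-1}_j$, which is disjoint from $Z^n_i$) as well as open, so connectedness of $Z^n_i$ gives $M_{j\alpha}=Z^n_i$; this should be said if you keep that route. The simpler local argument dissolves the issue entirely.
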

[Again this is given and proved in terms of the simplicial
triangulation.

In terms of the structure of bundles over the $Z^{n-1}$ with
triad bundle, the assumption that $Z^{n-1}_j$ is of the first
class tells us that the bundle is trivial, so that
${\bar M}^{n-1}_{j\alpha} = Z^{n-1}_j \times [0,1]$, and in
these terms 4) is geometrically transparent:
$$\partial M^{n-1}_{j\alpha} = Z^{n-1}_j \times \{0\} +
Z^{n-1}_{j\alpha} \times \{1\}.]$$

\medskip
\hrule\smallskip
\noindent p.~165.
\medskip

\noindent 5.
\begin{lem}\label{Lemma 15} Let $F^n$ be orientable and
$H^{fin}_1(F^n,J_2) = 0$; then all $Z^{n-1}_j$ are orientable.
\end{lem}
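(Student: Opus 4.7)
The plan is to first show that each connected component $F^{n-1}_j$ of $F^{n-1}=\varphi^{-1}(Z^{n-1})$ is orientable, and then to transfer orientability to each $Z^{n-1}_k$ via Lemma \ref{Lemma 6}(4)--(5). The advantage of this route is that $F^n$ is a smooth closed manifold (orientable by hypothesis), so standard Poincar\'e-duality machinery is available; we then ride the covering $\varphi$ back up to the central set.

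The key input is Poincar\'e duality. Since $F^n$ is closed and orientable, with $J_2$-coefficients we have
\[
H_{n-1}(F^n,J_2)\;\cong\;H^1(F^n,J_2)\;\cong\;\mathrm{Hom}\bigl(H^{fin}_1(F^n,J_2),J_2\bigr)\;=\;0,
\]
using universal coefficients over the field $J_2$ together with the hypothesis. Hence every closed connected codim-$1$ submanifold of $F^n$ has vanishing mod-$2$ fundamental class, bounds a mod-$2$ $n$-chain, and consequently separates $F^n$. Each $F^{n-1}_j$ is a closed connected $(n-1)$-submanifold of $F^n$: the triad-bundle hypothesis 11) together with the smoothness of $F^n$ realizes the three sheets of $\varphi^{-1}(Z^{n-1}_{k_j})$ near a triple point as three disjoint codim-$1$ disks in $F^n$. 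Since $F^{n-1}_j$ separates $F^n$ it is two-sided, its normal line bundle $\nu$ in $F^n$ is trivial, and the Whitney sum $TF^n|_{F^{n-1}_j}=TF^{n-1}_j\oplus\nu$ combined with $w_1(TF^n)=0$ forces $w_1(F^{n-1}_j)=0$, so each $F^{n-1}_j$ is orientable.

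With every $F^{n-1}_j$ orientable, Lemma \ref{Lemma 6}(4) produces at least one $F^{n-1}_j$ above each $Z^{n-1}_k$, and Lemma \ref{Lemma 6}(5) transfers orientations down to $Z^{n-1}_k$. The principal technical point I anticipate is the submanifold claim: one must confirm, from conditions 7)--12) of the normally imbedded 3-complex, that each $F^{n-1}_j$ sits inside the smooth $F^n$ as a genuine codim-$1$ topological submanifold (and not merely as a subcomplex of the triangulation that could fail to separate its ambient). A minor bookkeeping point is that $F^n$ need not be connected; but since the hypothesis $H^{fin}_1(F^n,J_2)=0$ and orientability pass to each component, the argument is applied componentwise.
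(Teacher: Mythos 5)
Your proposal is correct and follows essentially the same route as the paper's own proof: invoke Poincar\'e duality to show the classes $[F^{n-1}_j]$ vanish, conclude that each $F^{n-1}_j$ is two-sided in the orientable $F^n$ and hence orientable, then push orientability down through $\varphi$ to $Z^{n-1}_k$ via Lemma \ref{Lemma 6}. The paper quotes duality directly in the form $H^{fin}_1(F^n,J_2)\approx H^{inf}_{n-1}(F^n,J_2)$; you route through $H^1$ and universal coefficients, which is equivalent. Two small notational/technical cautions. First, the lemma is applied not only to the compact $F^n$ of Theorem~\ref{Theorem 1} but also to $\tilde F^n$, which may be noncompact, and the $F^{n-1}_j$ may be noncompact; so the left-hand group in your duality chain should be $H^{inf}_{n-1}(F^n,J_2)$, and the fundamental class $[F^{n-1}_j]$ also lives in $H^{inf}_{n-1}$ --- your unsubscripted $H_{n-1}$ reads as the finite-chains group. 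Second, the chain "$[F^{n-1}_j]=0\Rightarrow$ bounds $\Rightarrow$ separates $\Rightarrow$ two-sided" works, but the separation step is the nontrivial part; the shorter route is the intersection-number argument (if $F^{n-1}_j$ were one-sided, a small loop $\gamma$ transverse to it once would have $\gamma\cdot F^{n-1}_j=1$, hence $[\gamma]\neq 0$ in $H^{fin}_1(F^n,J_2)$, a contradiction). The translator also records an alternative proof that bypasses duality altogether by taking a finite $2$-chain $c^2$ with $\partial c^2=\gamma$ for $\gamma\subset F^{n-1}_j$, putting it in general position, and decomposing $\gamma$ into simple cycles that preserve orientation; you did not use that variant, but you matched the original authors' route.
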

[The proof given invokes Poincar\'e duality (\cite[\S 33]{E})
in the form $H^{fin}_1(F^n,J_2) \approx H^{inf}_{n-1}(F^n,J_2)$,
and goes on to argue that $(n-1)$-submanifolds $F^{n-1}_j$ of
$F^n$ are orientable. Then since $\varphi ; F^{n-1}_j
\to Z^{n-1}_j$ is a 3-fold cover, $Z^{n-1}_j$ must be orientable
too.

We can avoid the use of Poincar\'e duality by a more direct
argument to show that $F^{n-1}_j$. Suppose we have a loop
$\gamma$ in $F^{n-1}_j$. What $H^{fin}_1(F^n,J_2) = 0$ means
is that $\gamma = \partial c^2$, where $c^2$ is a finite 2-chain
mod 2. Hence $c^2$ is carried by a compact immersed 2-manifold
$S$ with boundary. We can put $S$ in general position relative
to $F^{n-1}_j$, which means that the intersection is a graph
including $\gamma$ in such a way that vertices on $\gamma$
are all triple points and there are no other branch points. Using
this graph we decompose $\gamma$ into a sum of simple cycles
along which $S$ provides a normal field to $F^{n-1}_j$. Since
$F^n$ is orientable, these simple cycles preserve orientation
on $F^{n-1}_j$, and hence so does $\gamma$.]

\section{Representing graph}\label{sec:graph}
\noindent 1. The complex $Z^n$ is built from subcomplexes $Z^n_i$,
attached to one another by subcomplexes $Z^{n-1}_j$; for a more
detailed study of this situation we construct the {\em representing
graph} $\Gamma$ of the 3-complex $Z^n$. This has two kinds of vertices:

$e_i$ -- principal vertices, one for each $Z^n_i$;

$\epsilon_j$ -- auxiliary vertices, one for each $Z^{n-1}_j$;

\noindent and edges $k_{j\alpha}$ corresponding to the sheets
$M_{j\alpha}$ and joining a principal vertex to an auxiliary
vertex if and only if the sheet of the auxiliary vertex
$\epsilon_j$ is contained in the $Z^n_i$ corresponding to the
principal vertex $e_i$.

There are just 3 edges ending in each auxiliary vertex; even
if some of the 3 sheets coincide we still take 3 edges [but see
the next paragraph].

Somewhat retreating from the customary definition of a graph,
we call the set of all vertices and edges of $\Gamma$ the
{\em representing graph} of the 3-complex $Z^n$. We note that
manifolds $Z^{n-1}_j$ of the second and third class do not
play a r\^ole in the preceding definition, which will be used
only under conditions guaranteeing the nonexistence of such
manifolds.

\noindent 2. We say that a subgraph $\Gamma' \subset \Gamma$ is a
{\em proper tree} if $\Gamma'$ has no cycles and each auxiliary
vertex of $\Gamma'$ is incident with exactly two edges.
[This definition seems incomplete: I think they intend to
include connectedness and/or maximality with respect to
the specified properties.]

\medskip
\hrule\smallskip
\noindent p.~166.
\medskip

\begin{lem}\label{Lemma 16} $\Gamma$ has either a cycle or a
proper tree.
\end{lem}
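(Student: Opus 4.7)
The plan is to prove the contrapositive: assuming $\Gamma$ has no cycle, I would construct a proper tree. Acyclicity makes each connected component of $\Gamma$ a tree; I focus on a single component $T$ containing at least one auxiliary vertex (otherwise $\Gamma$ has no edges and the conclusion is vacuous). Two structural observations drive the construction: $\Gamma$ is bipartite, with every edge joining an auxiliary vertex to a principal one, and every auxiliary vertex has degree exactly three. Counting the edges of the tree $T$ with $V_p$ principal and $V_a$ auxiliary vertices yields $V_p + V_a - 1 = 3V_a$, so $V_p = 2V_a+1$; a target proper tree should therefore have $V_a$ auxiliary vertices (each of degree two), $V_a + 1$ principal vertices, and $2V_a$ edges.

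I would build the candidate $\Gamma'$ starting from the Steiner subtree $T_A \subset T$ spanning all auxiliary vertices of $T$. Its leaves are necessarily auxiliary, since any interior principal lies on a geodesic between two auxiliary vertices and so has $T_A$-degree at least two. At each leaf-auxiliary of $T_A$ I would reinstate one of its two $T$-edges not in $T_A$; such an edge leads into a principal-only subtree of $T$, so it creates no cycle and promotes the auxiliary's degree from one to two. Auxiliary vertices of $T_A$-degree two need no modification.

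The main obstacle is the auxiliary vertices of $T_A$-degree three: reducing their degree to two means dropping an edge of $T_A$, which disconnects $T_A$ and can strand further auxiliary vertices. I would handle this by induction on $V_a$. Acyclicity and local finiteness of $T$ guarantee a ``peripheral'' auxiliary vertex $\epsilon$ at least one of whose three subtrees in $T \setminus \{\epsilon\}$ contains no further auxiliary vertex; the edge entering that direction can then be discarded safely. Peeling $\epsilon$ off reduces $V_a$ by one, the inductive hypothesis furnishes a proper tree for the remainder, and re-attaching $\epsilon$ with degree two completes the step; the base case $V_a = 1$ is immediate (take $\epsilon$ together with any two of its three incident edges). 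The interpretive ambiguity flagged in the editor's remark --- connectedness, spanning all auxiliary vertices, or maximality --- is benign for this scheme, since the induction is stable under each plausible strengthening of the definition of a proper tree.
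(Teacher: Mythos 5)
Your approach is genuinely different from the paper's. The paper builds $\Gamma'$ greedily, starting from a single principal vertex with all its edges, and at each stage adjoining one further edge to every degree-one auxiliary vertex, together with all edges and ends at the newly admitted principal vertices; if this ever forces a third edge at some auxiliary vertex it produces two distinct paths from the seed, hence a cycle, and otherwise $\Gamma'=\bigcup_\mu\Gamma_\mu$ is a proper tree. You instead argue by contrapositive via a Steiner subtree and induction on the number $V_a$ of auxiliary vertices. This does not work as stated, for two reasons.

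First, and most seriously, you tacitly assume $\Gamma$ is finite. The lemma is applied in Lemma \ref{Lemma 19} not only to $Z^n\subset T^{n+1}$ but also (via Lemma \ref{Lemma 5}) to $\tilde Z^n\subset\tilde T^{n+1}$, the lift to the universal cover, where the numbers of components $\tilde Z^n_i$ and $\tilde Z^{n-1}_j$ are allowed to be countably infinite; so $\Gamma$ is only locally finite, not finite. Your Euler count $V_p+V_a-1=3V_a$, your induction on $V_a$, and your existence claim for a ``peripheral'' auxiliary vertex all fail then: take the $3$-regular infinite tree and subdivide each edge once by a principal vertex --- every auxiliary vertex then has all three of its subtrees containing further auxiliary vertices, so no peripheral vertex exists, yet the graph is acyclic, locally finite, and bipartite with auxiliary degree $3$. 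The paper's recursion is designed exactly to survive this: $\Gamma'$ is obtained as an increasing countable union, and no finiteness is needed.

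Second, even granting finiteness, the inductive step is underspecified. After you delete $\epsilon$, the tree falls into three components, and you want to apply the inductive hypothesis to the two containing auxiliary vertices and then re-insert $\epsilon$ with its two surviving edges. But those edges land on principal vertices $e_1,e_2$ of the two components, and the proper trees furnished by the inductive hypothesis need not contain $e_1$ or $e_2$; if they do not, your re-attached $\Gamma'$ is disconnected. You would need to strengthen the inductive hypothesis to control which principal vertices the proper tree contains, and at that point you are essentially rebuilding the paper's rooted recursion. Relatedly, the paper's construction guarantees an extra property that Lemma \ref{Lemma 18} relies on --- whenever a principal vertex lies in $\Gamma'$, every edge incident to it does too --- and your proposal never addresses this.
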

\begin{proof} Build $\Gamma'$ recursively as an increasing
union of connected subgraphs. Start with $\Gamma_1$
consisting of a single principal vertex, all of the edges from
it, and the auxiliary vertices at the other end of those edges.

Stop whenever a cycle is obtained. Otherwise get
$\Gamma_{\mu +1}$ from $\Gamma_\mu$ by choosing a second
edge for each auxiliary vertex of $\Gamma_\mu$ which has
no second edge, add in the other ends of those new second
edges, and add in all the edges (and their ends) incident to
the new principal vertices.

\medskip
\hrule\smallskip
\noindent p.~167.
\medskip

In this process, if we are forced to take into $\Gamma_{\mu+1}$
the third edge of some auxiliary vertex already in $\Gamma_\mu$,
then within $\Gamma_{\mu+1}$ there are two distinct paths
from the starting vertex to the auxiliary vertex in question:
one in $\gamma_\mu$ and one in $\Gamma_{\mu+1}$ using
the third edge. Hence $\Gamma_{\mu+1}$ must contain a cycle.

Taking $\Gamma' = \bigcup_\mu \Gamma_\mu$, either
$\Gamma'$ has a cycle or it is a proper tree such that
\begin{description}
\item[1)] $\Gamma'$ is connected and
\item[2)] whenever a principal vertex belongs to $\Gamma'$,
then so do all the edges incident to it.
\end{description}
\end{proof}

\medskip
\hrule\smallskip
\noindent p.~168.
\medskip

\noindent 3. Corresponding to each cycle $\Gamma_0 \subset \Gamma$
we construct a 1-cycle with compact support in the polyhedron
$Z^n$. Let $\Gamma_0$ consist of edges $k_{j_s\alpha_s},
s=1,\ldots, t$; $t$ is even (equal to twice the number of
principal vertices incident to the edges of $\Gamma_0$).
Let the numbering of the edges of $\Gamma_0$ be carried
out so that the ends of $k_{j_s\alpha_s}$ are the principal
vertex $e_{i_s}$ and the auxiliary vertex $\epsilon_{j_s}$,
$\epsilon_{j_s} = \epsilon_{j_{s+1}}$ for $1 \le s < t, s$ odd,
$e_{i_s} = e_{i_{s+1}}$ for $1<s<t, s$ even, $e_{i_t} = e_{i_1}$.
For convenience in writing out we will understand by
$k_{j_{t+1},\alpha_{t+1}}, \epsilon_{j_{t+1}}$, respectively,
$k_{j_1,\alpha_1}, \epsilon_{j_1}$.
\begin{lem}\label{Lemma 17} For each vertex of $\Gamma_0$ choose
a point in the interior of the corresponding submanifold $Z^n_i$
or $Z^{n-1}_j$. For each edge of $\Gamma_0$ with ends $e_i$
and $\epsilon_j$ choose a path in $Z^n_i$ from $e_i$ to
$\epsilon_j$ which contains points of only the corresponding
sheet of $Z^{n-1}_j$ besides points in $Z^n_i$ -- no other sheets
of $Z^{n-1}_j$ nor any other $Z^{n-1}_k, j \ne k$. Then these
paths will form a loop in $Z^n$, and if the loop in $\Gamma_0$
is simple, the loop in $Z^n$ can be chosen to be simple as well.
Moreover, at points of $Z^{n-1}$

\medskip
\hrule\smallskip
\noindent p.~169.
\medskip

\noindent on this loop in $Z^n$ the loop passes from one sheet of
$Z^{n-1}_j$ to another, and (if simple) can never hit $Z^{n-1}_j$
again because $\epsilon_j$ can only occur once in the loop of
$\Gamma_0$.
\end{lem}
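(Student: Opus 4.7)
The plan is to construct the loop in $Z^n$ edge-by-edge, realizing each edge of $\Gamma_0$ as an arc in the corresponding stratum of $Z^n$, then verifying that the arcs concatenate into a loop and, in the simple case, upgrading to an embedding by a general position argument.

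First I fix interior base points $p_i \in Z^n_i$ for each principal vertex of $\Gamma_0$ and $q_j \in Z^{n-1}_j$ for each auxiliary vertex. For each edge $k_{j_s \alpha_s}$ joining $e_{i_s}$ to $\epsilon_{j_s}$ I produce an arc $\sigma_s$ in $\bar{Z}^n_{i_s}$ running from $p_{i_s}$ to $q_{j_s}$ whose interior lies in $Z^n_{i_s}$ and whose terminal approach to $q_{j_s}$ takes place inside the designated sheet $M_{j_s \alpha_s}$. Existence rests on three ingredients: $Z^n_{i_s}$ is a connected $n$-manifold, $M_{j_s \alpha_s} \subset \bar{Z}^n_{i_s}$ by part 3) of Lemma \ref{Lemma 14}, and the local triad-bundle structure guaranteed by axiom 11) near $Z^{n-1}_{j_s}$ allows me to choose the terminal segment entirely inside one sheet, avoiding the other sheets at $\epsilon_{j_s}$ and all other components $Z^{n-1}_k$ with $k \neq j_s$.

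Next, the arcs $\sigma_s$ concatenate into a loop: for odd $s$ the shared endpoint is $q_{j_s} = q_{j_{s+1}}$ (using $\epsilon_{j_s} = \epsilon_{j_{s+1}}$), for even $s$ it is $p_{i_s} = p_{i_{s+1}}$ (using $e_{i_s} = e_{i_{s+1}}$), and the cycle closes because $e_{i_t} = e_{i_1}$. If $\Gamma_0$ is a simple cycle no principal or auxiliary vertex repeats, so all base points are distinct and each $\bar{Z}^n_{i_s}$ hosts at most two arcs, meeting only at the shared endpoint $p_{i_s}$. A transversality perturbation in the $n$-dimensional interior of each $Z^n_{i_s}$ removes any accidental self- or cross-intersections of arc interiors and yields an embedded loop.

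The moreover clause then follows by inspection: the only loop points in $Z^{n-1}$ are the chosen $q_j$'s, and at each one the arriving sheet $M_{j \alpha_s}$ differs from the departing sheet $M_{j \alpha_{s+1}}$, since distinct edges of $\Gamma$ incident to $\epsilon_j$ correspond to distinct values of $\alpha$; in the simple case $\epsilon_j$ appears only once, so the loop meets $Z^{n-1}_j$ only at $q_j$. The step I expect to require the most care is the general position argument for embeddedness when $n = 2$, where the arcs are codimension one in $\bar{Z}^n_{i_s}$; there I would exploit the PL triangulation from Lemma \ref{Lemma 4} and move arc interiors by small simplicial perturbations that preserve the endpoints and the prescribed entry sheets.
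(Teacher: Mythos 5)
The paper gives no explicit proof of Lemma \ref{Lemma 17}: the statement itself describes the construction (choose base points in the strata, choose arcs in each $Z^n_{i_s}$ meeting $Z^{n-1}$ only in the designated sheet, concatenate), and the paper leaves verification to the reader. Your write-up supplies exactly the details that are implicit there, in what is plainly the intended route: existence of the arcs from connectedness of $Z^n_{i_s}$, part 3) of Lemma \ref{Lemma 14}, and the local triad-bundle structure from axiom 11); concatenation from the combinatorial conventions $\epsilon_{j_s}=\epsilon_{j_{s+1}}$ ($s$ odd), $e_{i_s}=e_{i_{s+1}}$ ($s$ even), $e_{i_t}=e_{i_1}$; and the ``moreover'' clause from the fact that distinct edges at $\epsilon_j$ correspond to distinct sheets. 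So this is not a different route --- it is the paper's construction with the missing justifications written out.

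One small refinement worth making on the point you flag as delicate: the $n=2$ embeddedness worry disappears if, in each $Z^n_i$ visited by the simple cycle $\Gamma_0$, you do not pick $p_i$ first and then run two separate arcs to the two sheets. Instead, choose a single simple arc in $Z^n_i$ running from a small collar of the incoming sheet to a small collar of the outgoing sheet (such an arc exists since $Z^n_i$ is a connected $n$-manifold and the two sheets lie in its boundary), and then declare $p_i$ to be any interior point of that arc. Because for a simple $\Gamma_0$ the $Z^n_i$'s and the $Z^{n-1}_j$'s appearing are pairwise distinct and pairwise disjoint, the concatenation of these arcs together with the prescribed sheet crossings at the $q_j$'s is automatically a simple closed curve, with no transversality perturbation needed in any dimension $n\ge2$. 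Everything else in your argument is correct.
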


This means that each $Z^{n-1}_j$ crossed by one of these loops
does not separate $Z^{n-1}_j$. [What if $n=2$ and $Z^{n-1}_j$ is
not closed?]

\noindent 4. For a proper tree $\Gamma' \subset \Gamma$ we construct
an $n$-dimensional submanifold $M'\subset Z^n$. $M'$ consists
of the union of ${\bar Z}^n_i$ for which $e_i$ is a vertex of
$\Gamma'$. By the requirement that $\Gamma'$ has all of the
edges attached to such an $e_i$, all of the boundary of $Z^n_i$
is contained in $M'$. By the requirement that each $\epsilon_j$
in $\Gamma'$ is incident to exactly two edges in $\Gamma'$,
$M'$ is a manifold in a neighborhood of each point of $Z^{n-1}_j$,
since exactly two of the 3 sheets along $Z^{n-1}_j$ are contained
in $M'$.
\begin{lem}\label{Lemma 18} Clearly $M'$ forms an $n$-cycle mod 2 in
$T^{n+1}$. If $F^n$ is connected, then $M'$ does not separate
$T^{n+1}$. Indeed, starting at a point $A$ of $M'$ in $Z^n_i$ we
can run out on either side to the points $\varphi^{-1}(A) =
\{ A', A''\} \subset F^n$. Then $A', A''$ can be connected by a
path in $F^n$, closing a loop which crosses $M'$ simply.
\end{lem}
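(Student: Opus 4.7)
The plan is to verify the two assertions separately, and for each to work simplex-by-simplex using the combinatorics already set up. The first assertion (that $M'$ is an $n$-cycle mod $2$) is essentially a bookkeeping argument at the $(n-1)$-simplices, while the second (non-separation) needs a construction of an explicit loop of odd intersection number with $M'$.

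For the first assertion, I would compute $\partial M' \pmod 2$ by examining each $(n-1)$-simplex $\sigma$ of $Z^n$. If $\sigma$ lies in the interior of some $\bar Z^n_i$ (not in $Z^{n-1}$), then $\sigma$ is a face of exactly two $n$-simplices of that $\bar Z^n_i$ (interior faces of an $n$-manifold), and of no other $\bar Z^n_k$, so it contributes $0 \pmod 2$. If $\sigma \subset Z^{n-1}_j$, then by property 4) of a $3$-complex, $\sigma$ is a face of exactly three $n$-simplices of $Z^n$, one in each sheet $M_{j1}, M_{j2}, M_{j3}$. Here the key point is that if $\bar Z^n_i \subset M'$ is adjacent to $Z^{n-1}_j$, then $e_i \in \Gamma'$ forces (by property 2) of $\Gamma'$) the auxiliary vertex $\epsilon_j$ and all edges incident to $e_i$ to lie in $\Gamma'$; and since $\epsilon_j$ has exactly two edges in the proper tree $\Gamma'$, precisely two of the three sheets along $Z^{n-1}_j$ belong to $M'$. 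Hence $\sigma$ is a face of exactly two $n$-simplices of $M'$, contributing $0 \pmod 2$. Thus $\partial M' = 0 \pmod 2$, so $M'$ is a (possibly infinite) $n$-cycle mod $2$ in $T^{n+1}$.

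For the second assertion, I would construct a loop $\gamma$ in $T^{n+1}$ whose mod $2$ intersection number with $M'$ equals $1$, which contradicts $M'$ being the boundary of any $(n+1)$-chain (and hence rules out $M'$ separating $T^{n+1}$). Pick a vertex $A$ in the interior of some $\bar Z^n_i \subset M'$ whose closed $Z^n$-star avoids $Z^{n-1}$; such $A$ exists by the second sentence of property 12). By property 10), $\varphi$ is a $2$-fold cover on a neighborhood of $A$, so $\varphi^{-1}(A) = \{A', A''\} \subset F^n$. A small arc through $A$ transverse to $M'$ has its two endpoints in the two components of $(T^{n+1} \setminus Z^n) \cap W(A)$; applying the retraction $\psi_1$ sends these two endpoints to $A'$ and $A''$ respectively (the $\psi_t$-trajectories from the two sides of $M'$ near $A$ limit to the two distinct preimages under $\varphi$). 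Joining these endpoints to $A'$, $A''$ along $\psi_t$ and then connecting $A'$ to $A''$ by a path in the connected surface $F^n$ produces the desired loop $\gamma$, which meets $M'$ transversally at the single point $A$.

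The main obstacle is making precise the claim that the two local components of $T^{n+1} \setminus Z^n$ at $A$ are retracted by $\psi_1$ to the two distinct points $A'$ and $A''$, so that the loop genuinely crosses $M'$ only once. This is a local statement combining property 10) (evenness of the cover $\varphi$ near $A$) with the compatibility between $\varphi$ and $\psi$ supplied by properties 8), 9); once granted, transversality of the arc through $A$ plus the count $\gamma \cdot M' \equiv 1 \pmod 2$ rules out $M' = \partial c^{n+1}$ and hence rules out separation.
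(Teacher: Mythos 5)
Your proof follows the paper's approach exactly; the paper's proof is simply the sketch embedded in the lemma's own statement (``Indeed, starting at a point $A$ \dots''), and you have correctly supplied the simplex-counting for the cycle claim and the explicit transverse loop for the non-separation claim. The technical point you flag---that the two local sides of $M'$ near $A$ retract under $\psi$ to the two distinct preimages $A'$ and $A''$---is exactly what the paper takes for granted from the geometry of the central set, where the trajectories of $\varphi_t$ and $\psi_t$ are the inward normal segments from $F^n$, which meet $Z^n$ only at their endpoints.
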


\section{Proofs of theorems 1,2,3}\label{sec:proofs}

\noindent 1. In this paragraph theorems 1,2,3 are proved, giving sufficient
topological conditions for the validity of the bound $\kappa(F) \ge
\kappa_1R$ in the class $F_R$. First we prove two lemmas.
\begin{lem}\label{Lemma 19} For a 3-complex $Z^n$ normally imbedded
in $T^{n+1}$ the following conditions cannot hold simultaneously:
\begin{description}
\item[1)] the boundary $F^n$ of $T^{n+1}$ is connected;
\item[2)] $F^n$ is orientable;
\item[3)] $T^{n+1}$ is orientable;
\item[4)] $H^{fin}_1(F^n,J_2) = 0$;
\item[5)] $H^{fin}_1(T^{n+1},J_2) = 0$; [i.e., $H^{fin}_1(Z^n,J_2)
= 0$]
\item[6)] $H^{inf}_{n-1}(T^{n+1}, J) = 0$. [i.e.,
$H^{inf}_{n-1}(Z^n,J) =0$]
\end{description}
\end{lem}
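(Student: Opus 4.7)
The plan is to assume all six conditions hold and derive a contradiction via Lemma 16, after first narrowing down the possible structure of the components $Z^{n-1}_j$.

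First I would apply the three structural lemmas in sequence. By Lemma 15, using orientability of $F^n$ (condition 2) and vanishing of $H^{fin}_1(F^n,J_2)$ (condition 4), every $Z^{n-1}_j$ is orientable. Lemma 12, using orientability of $T^{n+1}$ (condition 3) together with the orientability of the $Z^{n-1}_j$, then rules out components of the second class, and Lemma 11, using $H^{inf}_{n-1}(T^{n+1},J) = 0$ (condition 6) together with the same orientability, rules out components of the third class. Every $Z^{n-1}_j$ is therefore of the first class, the representing graph $\Gamma$ is well-defined, and Lemma 16 hands us either a cycle or a proper tree in $\Gamma$.

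If $\Gamma$ contains a proper tree $\Gamma'$, I invoke Lemma 18, whose non-separation clause uses condition 1 that $F^n$ is connected: the union $M'$ of the corresponding $\bar Z^n_i$ is a non-separating $n$-cycle mod $2$ in $T^{n+1}$, and the proof furnishes an explicit loop $\ell \subset T^{n+1}$ meeting $M'$ transversely in a single point. Condition 5 writes $\ell = \partial c^2$ for a finite $2$-chain $c^2 \subset T^{n+1}$, and after perturbing $c^2$ transverse to $M'$ the intersection $c^2 \cap M'$ is a compact mod-$2$ $1$-manifold whose boundary is the single point $\ell \cap M'$. This violates the parity of the number of boundary points of a compact $1$-manifold.

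If instead $\Gamma$ contains a cycle $\Gamma_0$, Lemma 17 supplies a simple loop $\lambda \subset Z^n$ meeting each of the corresponding $Z^{n-1}_j$ transversely in exactly one point, and I would run an analogous intersection argument: condition 5, via the deformation retract, makes $\lambda$ the boundary of a finite $2$-chain, while $\lambda$ has mod-$2$ intersection number one with the oriented closed $(n-1)$-cycle $Z^{n-1}_j$ produced in Step 1. The main obstacle lies precisely here, because $Z^n$ is not a manifold but a pseudo-manifold three-fold branched along $Z^{n-1}$, so standard transversality is not available globally. The cleanest workaround is to perform the transverse calculation inside each sheet $\bar M_{j\alpha} \cong Z^{n-1}_j \times [0,\epsilon]$ separately — each such sheet is a genuine $n$-manifold in which $Z^{n-1}_j$ is bicollared and transversality holds — and then to assemble the three contributions using the chain condition on $c^2$ across $Z^{n-1}_j$. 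Equivalently, the computation can be transplanted into the ambient manifold $T^{n+1}$: use condition 6 to cap $Z^{n-1}_j$ by an infinite $n$-chain, adjoin the unused third sheet $\bar M_{j\alpha''}$ at $\epsilon_j$ together with a local cap for its far boundary $z^{n-1}_{j\alpha''}$, and thereby build a closed infinite $n$-cycle $N \subset T^{n+1}$ with $\lambda \cdot N \equiv 1 \pmod{2}$; condition 5 then forces $\lambda \cdot N = 0$, the needed contradiction.
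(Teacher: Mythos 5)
Your overall architecture matches the paper exactly: step 1 (Lemmas 15, 12, 11 in that order, deducing that every $Z^{n-1}_j$ is first class, then invoking Lemma \ref{Lemma 16}) is identical, and your treatment of the proper-tree case is the paper's part c) of the argument phrased as a parity count on a compact 1-manifold.

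The gap is in the cycle case, and to your credit you put your finger on exactly where it lies: $Z^n$ is not a manifold along $Z^{n-1}$, so the transversality you want for the intersection $c^2 \cap Z^{n-1}_j$ is not available off the shelf. But neither of your two proposed repairs actually closes it. The ``sheet-by-sheet'' version is only a sketch --- for $n>2$ a $2$-chain $c^2$ does not interact with the triple-branching in any way governed by the ``chain condition across $Z^{n-1}_j$'', which constrains $n$-chains, not $2$-chains, so there is no clean assembly rule to invoke. The ``transplant to $T^{n+1}$'' version is worse: capping $Z^{n-1}_j$ by an infinite $n$-chain $W$ (condition 6)) and capping $z^{n-1}_{j\alpha''}$ by a second chain $W'$ does yield a closed cycle $N$, but $\lambda$ can cross $W$ and $W'$ arbitrarily, and you have no control on those contributions. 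In fact, precisely because $[\lambda]=0$ by condition 5), the homological pairing with any cycle vanishes; the contradiction must come from a geometric count that you can evaluate to be odd, and $N$ as built does not give you that.

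The paper's own resolution sidesteps the non-manifold point entirely by working in a single open component $Z^n_i$ and pairing against the parallel cycle $z^{n-1}_{j\alpha}$ of Lemma \ref{Lemma 14} (equation \eqref{eq:lem14}), which lies in $Z^n_i$ away from the branch locus $Z^{n-1}$. Restricting $c^2$ to $c^2_i$, the part carried by $\bar Z^n_i$, gives $\partial c^2_i = \zeta_i + c_i$ with $c_i\subset Z^{n-1}$ disjoint from $z^{n-1}_{j\alpha}$, so $\partial c^2_i \times z^{n-1}_{j\alpha} = \zeta_i \times z^{n-1}_{j\alpha} = 1$ is an honest intersection number inside the manifold $Z^n_i$. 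The vanishing side is then a Stokes-type argument made legitimate despite $z^{n-1}_{j\alpha}$ being a possibly infinite cycle: take $C$, the finite union of closed stars meeting the finite chain $c^2_i$, let $c^{n-1}$ be the portion of $z^{n-1}_{j\alpha}$ inside $C$, and observe that $\partial c^2_i \times z^{n-1}_{j\alpha} = \partial c^2_i \times c^{n-1} = c^2_i \times \partial c^{n-1} = 0$ since $\partial c^{n-1}\subset\partial C$ misses the interior of $c^2_i$ (equation \eqref{eq:24}). You should replace your two tentative workarounds with this finite-support intersection argument against $z^{n-1}_{j\alpha}$; then the rest of your proof stands.
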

\begin{proof} We suppose 1) -- 6) hold.

a) From conditions 2) and 4) and Lemma 15 it follows that all
$Z^{n-1}_j$ are orientable.

Due to condition 6) and Lemma \ref{Lemma 11} there do not exist manifolds
$Z^{n-1}_j$ of the third class. From condition 3) and Lemma \ref{Lemma 12}
it follows that also there do not exist manifolds $Z^{n-1}_j$ of
the second class.

b) We consider the first possibility specified in Lemma 16: let
the representing graph $\Gamma$ of the 3-complex $Z^n$ contain
a cycle $\Gamma_0$. According to Lemma 17, $\Gamma_0$
corresponds to a 1-cycle $\zeta$ of the complex $Z^n$. From
condition 5) it follows that there is a finite 2-chain $c^2$ in
$Z^n$ such that $\partial c^2 = \zeta$ mod 2. (21) [Some of the numbering of equations in the original is retained.]

\medskip
\hrule\smallskip
\noindent p.~171.
\medskip

[This is my proof. We may assume $\zeta$ is a simple loop. We can realize $c^2$ as a union of immersed
compact surfaces, one of which has boundary $\zeta$, the others
without boundary. By taking them in general position we can
assume that the intersection with any $Z^{n-1}_j$ is a union of
regular curves. Since $Z^{n-1}_j$ is a closed $(n-1)$ manifold
(not necessarily compact), the intersections with these surfaces
are circles except for the one with boundary $\zeta$. Because
$\zeta$ crosses $Z^{n-1}_j$ just once, there is only one endpoint
for the intersection of that part of $c^2$ with $Z^{n-1}_j$ which
is impossible.]

[The proof given.] We take an arbitrary manifold $Z^n_i$
intersecting $\zeta$; designate by $c^2_j$ the 2-chain mod 2
consisting of all simplices of ${\bar Z}^n_i$ belonging to $c^2$.
[From part 3, \S \ref{sec:graph}, we had $\zeta_i$, the part of $\zeta$ in
${\bar Z}^n_i$.] Then from (21) we see that 
\begin{equation}\partial c^2_i = \zeta_i + c_i \label{eq:22}
\end{equation}
where $c_i \subset Z^{n-1}$. In defining ``sheets''
$M^{n-1}_{j\alpha}$ we had a cycle ``parallel'' to $Z^{n-1}_j$
forming the boundary of that sheet
$$\partial M^{n-1}_{j\alpha} = Z^{n-1}_j + z^{n-1}_{j\alpha}.$$
Let $Z^{n-1}_j$ be one of the components of $Z^{n-1}$ containing
an end of $\zeta_i$ so that the sheet $M^{n-1}_{j\alpha} \subset
{\bar Z}^n_i$. Then $\zeta$ has intersection number 1 with
$z^{n-1}_{j\alpha}$, just as it does with $Z^{n-1}_j$. We use
notation $\times$ for intersection numbers: $z^{n-1}_{j\alpha}
\times \zeta = 1$. Since $z^{n-1}_{j\alpha} \subset Zn_i \subset
Z^n \setminus Z^{n-1}$, from (\ref{eq:22}) we obtain
\begin{equation}\partial c^2_i \times z^{n-1}_{j\alpha} = 1 \text{ in } Z^n_i.
\label{eq:23}
\end{equation}

Let $C$ be the union of all the closed stars of the complex
${\bar Z}^n_i$ intersecting the support of $c^2_i$. Since $c^2_i$
is finite and $Z^n$ is a locally finite polyhedron, so also $C$
consists of a finite number of simplices.

It is clear that
\begin{equation}\partial C \cap c^2_i \cap Z^n_i = 0. \label{eq:24}
\end{equation}
[$C$ is a tubular neighborhood of $c^2_i$, so its boundary only
intersects $c^2_i$ at the ends of the tube, which lie in $Z^{n-1}$,
excluded from the open manifold $Z^n_i$.]

Let $c^{n-1}$ consist of all the simplices of $Z^{n-1}_{j\alpha}$
belonging to $C$; since $z^{n-1}_{j\alpha}$ is a cycle (generally
speaking, infinite), $\partial c^{n-1}$ is contained in the support
of $\partial C$, and from (\ref{eq:24}) we arrive at
$$\partial c^2_i \times z^{n-1}_{j\alpha} = \partial c^2_i \times
c^{n-1} = c^2_i \times \partial c^{n-1} = 0,$$
which contradicts (\ref{eq:23}).

c) We consider the second possibility specified in Lemma \ref{Lemma 16}: let
$\Gamma$ contain a proper tree $\Gamma'$. The result of part
a) allows the use of Lemma \ref{Lemma 18}. According to Lemma \ref{Lemma 18},
$\Gamma'$ corresponds to an $n$-dimensional (generally
speaking infinite) cycle $\zeta^n$ mod 2 of the polyhedron
$Z^n$. [$\zeta^n$ is an $n$-manifold. From a point on it, $A_0$,
we can move on paths on either side (locally) in $T^{n+1}$ out
to points $B_1,B_2 \in F^n$. connecting $B_1,B_2$ by an arc in
$F^n$ we get a loop $\delta$ in $T^{n+1}$ having a simple
intersection with $\zeta^n$.] $\delta$ can be represented
simplicially and we have $\delta \times \zeta^n = 1$ (27).

\medskip
\hrule\smallskip
\noindent p.~172.
\medskip

By condition 5), $\delta = \partial b^2$ mod 2 for some finite
mod 2 2-chain $b^2$. By the same argument as in b) we reach a
contradiction. [$b^2$ is essentially a compact immersed surface
with boundary $\delta$. It can be taken in general position
relative to $Z^n$, so the intersection is a regular curve. But that
curve only has one end by (27).]
\end{proof}

\noindent 2.
\begin{lem}\label{Lemma 20} Let $T^{n+1}$ be a manifold with
boundary $F^n$, lying in Euclidean space $E^{n+1}$, and with
$F^n$ connected. If the homomorphism $h: \pi_1(F^n) \to
\pi_1(T^{n+1})$, induced by the inclusion of $F^n$ in $T^{n+1}$,
is an isomorphism [1-1 and onto], then in the universal covering
$\tilde T^{n+1}$ of the polyhedron $T^{n+1}$ the polyhedron
$\tilde F^n$ covering $F^n$ is connected and simply connected.
If in addition $\pi_2(T^{n+1}) = 0$, then
$H^{fin}_2(\tilde T^{n+1}, J) = 0$.
\end{lem}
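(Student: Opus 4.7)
The plan is to handle the two assertions separately: the first (connectedness and simple connectedness of $\tilde F^n$) is a direct application of covering-space theory, and the second (vanishing of $H^{fin}_2$) reduces to the Hurewicz theorem applied to the simply connected space $\tilde T^{n+1}$.

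For the first assertion, I would invoke the standard fact that the components of $\kappa^{-1}(F^n)$ under a covering $\kappa:\tilde T^{n+1}\to T^{n+1}$ corresponding to a subgroup $H\le\pi_1(T^{n+1})$ are in bijection with the double cosets $H\backslash\pi_1(T^{n+1})/h(\pi_1(F^n))$. Here $H=\{1\}$ since $\kappa$ is the universal cover, and $h(\pi_1(F^n))=\pi_1(T^{n+1})$ since $h$ is surjective, so the double coset space has a single element and $\tilde F^n$ is connected. The restriction $\kappa|_{\tilde F^n}\colon\tilde F^n\to F^n$ is then itself a covering, and by the lifting criterion its characteristic subgroup consists of those $[\gamma]\in\pi_1(F^n)$ whose image $h([\gamma])$ is trivial in $\pi_1(T^{n+1})$; that is, the characteristic subgroup is $\ker h$. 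Since $h$ is an isomorphism, $\ker h$ is trivial, so $\kappa|_{\tilde F^n}$ is the universal cover of $F^n$ and $\tilde F^n$ is simply connected.

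For the second assertion I would use that $\tilde T^{n+1}$, being the universal cover, is simply connected, so the Hurewicz theorem yields an isomorphism $H^{fin}_2(\tilde T^{n+1},J)\cong\pi_2(\tilde T^{n+1})$. Because covering maps induce isomorphisms on $\pi_k$ for $k\ge 2$, we have $\pi_2(\tilde T^{n+1})\cong\pi_2(T^{n+1})$, and the hypothesis $\pi_2(T^{n+1})=0$ then gives the vanishing.

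The one point that needs care, rather than a genuine obstacle, is the bookkeeping distinction between $H^{fin}$ and $H^{inf}$: the Hurewicz theorem applies to ordinary (finite-chain) singular homology, which is exactly what $H^{fin}$ denotes, so the argument is tight; the corresponding statement for $H^{inf}$ would not follow. Otherwise the proof is clean because the hypothesis that $h$ is an isomorphism was set up precisely so that both the double-coset count and the characteristic subgroup collapse simultaneously.
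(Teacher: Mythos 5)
Your proof is correct and follows essentially the same approach as the paper: covering-space theory handles connectedness and simple connectedness of $\tilde F^n$ (surjectivity of $h$ for the former, injectivity for the latter), and the Hurewicz theorem applied to the simply connected $\tilde T^{n+1}$ gives the vanishing of $H^{fin}_2$. The only difference is presentational — the paper establishes connectedness by directly lifting a loop in $T^{n+1}$ joining two points of a fiber, replacing it (via surjectivity of $h$) with a loop in $F^n$, and establishes simple connectedness by observing that a loop in $\tilde F^n$ projects to a loop trivial in $\pi_1(T^{n+1})$ hence (by injectivity of $h$) trivial in $\pi_1(F^n)$, whereas you package the same facts via the double-coset count and the identification of the characteristic subgroup of $\kappa|_{\tilde F^n}$ with $\ker h$.
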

\begin{proof} Let $\kappa : \tilde T^{n+1} \to T^{n+1}$ be the
covering map; then $\kappa^{-1}(F^n) = \tilde F^n$ is, evidently,
the union of a finite or countable number of (connected)
manifolds. We show that $\tilde F^n$ is connected. [Just lift a
path between the images of two points. This reduces it to the
case of connecting two points $\tilde a_1, \tilde A_2 \in
\kappa^{-1}(A), A\in F^n$. Then there is a loop in $T^{n+1}$ at $A$
such that its lift to $\tilde A_1$ is a path to $\tilde A_2$
Since $\pi_1(F^n) \to \pi_1(T^{n+1})$ is {\em onto}, the loop in
$T^{n+1}$ is homotopic to a loop in $F^n$ which lifts to a path
in $\tilde F^n$ connecting $\tilde A_1, \tilde A_2$.]

Now let $\tilde \lambda$ be a closed path in $\tilde F^n$ based
at $\tilde A$, $\kappa(\tilde \lambda) = \lambda, \kappa(\tilde A) =
A$; then $\lambda$ is homotopic to the trivial loop in $T^{n+1}$.
Since $h$ is 1-1, $\lambda$ is nonhomotopic to the trivial loop in
$F^n$; but then $\tilde \lambda$ is homotopic to the trivial loop
in $\tilde F^n$.

Finally, $\pi_2(\tilde T^{n+1}) = \pi_2(T^{n+1}) = 0$,
$\pi_1(\tilde T^{n+1}) = 0$, and by Hurewicz's theorem (cf.,
for example \cite[p. 57]{H}), $H^{fin}_2(\tilde T^{n+1}, J) = 0$.
\end{proof}

\noindent 3.
\begin{thm} [= Theorem \ref{Theorem 1}] Let $F^n$ be a surface of class
$F_R$ in Euclidean space $E^{n+1}$ and suppose $H_1(F^n, J_2)
= 0$. Then $\kappa(F) \ge \kappa_1R$.
\end{thm}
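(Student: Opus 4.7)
The plan is to use Lemma \ref{Lemma 2} as the endgame. Since the multiplicity of the central set $Z$ is already $>2$ under the flattening assumption, it is either at least $4$---in which case Lemma \ref{Lemma 2} immediately delivers a ball of radius $\kappa_1 R$ inside $F^n$---or exactly $3$. I will assume the latter and aim for a contradiction. Under that assumption Lemma \ref{Lemma 4} realizes $Z^n$ as a normally imbedded $3$-complex in $T^{n+1}$, setting up a potential application of Lemma \ref{Lemma 19}.

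The work reduces to verifying the six hypotheses of Lemma \ref{Lemma 19}. Conditions 1)--4) come essentially for free: we may take $F^n$ connected, since any other component of $F^n$ bounds a disjoint body that can be treated separately; $F^n$ is orientable as a compact $C^2$ hypersurface in $E^{n+1}$ (outward normal field); $T^{n+1}$ inherits orientability from $E^{n+1}$; and 4) is the hypothesis of the theorem itself, with $H^{fin}_1 = H_1$ since $F^n$ is compact.

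The substantive content lies in conditions 5) and 6), where the embedding in $E^{n+1}$ is exploited through Alexander duality in $S^{n+1}$. Writing $U = S^{n+1} \setminus T^{n+1}$ and using that $S^{n+1} \setminus F^n = T^{\circ} \sqcup U$, Alexander duality applied to $F^n$ gives
\[
H_1(F^n, J_2) \cong H^{n-1}(T^{\circ}, J_2) \oplus H^{n-1}(U, J_2),
\]
so both summands vanish mod $2$. Alexander duality applied directly to $T^{n+1}$ then produces $H_1(T^{n+1}, J_2) \cong H^{n-1}(U, J_2) = 0$, establishing 5).

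The main obstacle is condition 6), which demands integer rather than mod 2 coefficients and is not automatic from $H_1(F^n, J_2) = 0$. I plan a two-sided squeeze. On one side, the mod 2 hypothesis with $F^n$ connected forces $H_1(F^n, J)$ to have no free part and no $2$-torsion, so it is a finite group of odd order; Alexander duality on $F^n$ with integer coefficients then makes $H^{n-1}(T^{\circ}, J)$ torsion, and the universal coefficient theorem kills the $\mathrm{Hom}$-summand, showing $H_{n-1}(T^{n+1}, J)$ has no free part. On the other side, Alexander duality applied to $T^{n+1}$ yields $H_{n-1}(T^{n+1}, J) \cong H^1(U, J)$, which is automatically torsion-free. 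Combining, $H_{n-1}(T^{n+1}, J) = 0$. With all six conditions of Lemma \ref{Lemma 19} in hand, the resulting contradiction excludes multiplicity $3$, and Lemma \ref{Lemma 2} completes the proof.
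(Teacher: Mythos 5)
Your argument follows the paper's own route: reduce to the case of multiplicity exactly~$3$, realize the central set as a normally imbedded $3$-complex (Lemmas~\ref{Lemma 4},~\ref{Lemma 5}), and show that the six hypotheses of Lemma~\ref{Lemma 19} then all hold, with Alexander duality in $S^{n+1}$ carrying the weight for conditions~5) and~6). The only cosmetic difference is in condition~6): where the paper cites Aleksandrov for the vanishing of the torsion subgroup of $H_{n-1}(T^{n+1},J)$, you unpack that fact directly by identifying $H_{n-1}(T^{n+1},J)$ with $H^1(U,J)$, which is free by the universal coefficient theorem; this is a self-contained replacement for the citation rather than a genuinely different argument.
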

\begin{proof} According to Poincar\'e duality $H_{n-1}(F^n,J_2)
= H_1(F^n,J_2) = 0$ (\cite[p. 484 $3.33_2$]{A1}). Applying Alexander
duality to the polyhedron $F^n \subset E^{n+1}$ (\cite[p. 490, 4:13]{A1}), we are led to 
$$H_1(E^{n+1} \setminus F^n, J_2) = H_{n-1}(F^n, J_2) = 0.$$
But by the Jordan-Brouwer theorem (\cite[p. 519, 3:44]{A1}),
$T^{n+1}$ is a connected component of $E^{n+1} \setminus
F^n$, from whence $H_1(T^{n+1}, J_2) = 0$.

[There is a more direct argument that $H_1(F^n, J_2) = 0
\Rightarrow H_1(T^{n+1}, J_2) = 0$. Suppose we have a 1-cycle
mod 2 in $T^{n+1}$; that is, a formal sum of loops $z^1$. We can
fill a loop in $E^{n+1}$ with a surface $S$ which can be assumed
to have general position relative to $F^n$. The intersection of
that surface with $F^n$ then consists of several loops which
form the boundary of the inside $S\cap T^{n+1}$ except for the
given loop. Each of those loops in $S\cap F^n$ is the boundary of
a surface in $F^n$ since $H_1(F^n, J_2) = 0$, and if we replace
the outside $S \setminus T^{n+1}$ by these surfaces in $F^n$
we get a surface in $T^{n+1}$ whose boundary is the original
loop.]

Thus, conditions 4), 5) of Lemma \ref{Lemma 19} are satisfied. Moreover,
$H_1(F^n, J_2) = 0 \Rightarrow H_1(F^n, J)$ has no $J$-summand.
(\cite[p.358, theorem 4:41]{A1}.) By Alexander duality then
$H_{n-1}(T^{n+1}, J)$ has no $J$-summand (\cite[p. 490, 4:1]{A1});
the torsion group $\Theta_{n-1}(T^{n+1})$ is always trivial
(cf. the corollary of 4:1 immediately after the formulation of 4:1,
\cite[p. 490]{A1}). Hence $H_{n-1}(T^{n+1}, J) =0$, and condition 6)
of Lemma \ref{Lemma 19} holds.

Conditions 1), 3) hold by an obvious means. Finally, condition 2)
follows from the theorem of Jordan-Brouwer.

By Lemma \ref{Lemma 19} $T^{n+1}$ cannot contain a normally imbedded
3-complex, so that either the central set of $F^n$ has points
of multiplicity $>3$ and hence $\kappa(F^n) \ge \kappa_1R$, or the
cutlocus has focal points and $\kappa(F^n) \ge R$. This completes
the proof of Theorem \ref{Theorem 1}.
\end{proof}
\begin{thm} [= Theorem \ref{Theorem 2}] Let $F \in F_R$. If the homomorphism
$h: \pi_1(F) \to \pi_1(T)$ induced by the inclusion of $F$ in
$T$ is an isomorphism, and $\pi_2(T) = 0$, then $\kappa(F) \ge
\kappa_1R$.
\end{thm}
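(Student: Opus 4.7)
The plan is to apply Lemma \ref{Lemma 19} upstairs in the universal cover $\tilde{T}^{n+1}$ of $T^{n+1}$. By Lemmas \ref{Lemma 4} and \ref{Lemma 5}, if the central set $Z^n$ has multiplicity exactly $3$ then $\tilde{Z}^n$ is a 3-complex normally imbedded in $\tilde{T}^{n+1}$; so if conditions 1)--6) of Lemma \ref{Lemma 19} can be verified for the pair $(\tilde{T}^{n+1}, \tilde{F}^n)$, that lemma produces a contradiction. Combined with Lemma \ref{Lemma 2} (which handles multiplicity $\ge 4$) and the standing reduction to flattened surfaces (multiplicity $\ge 2$), this forces $\kappa(F) \ge \kappa_1 R$.

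The hypotheses feed into Lemma \ref{Lemma 20}: $h$ an isomorphism and $\pi_2(T) = 0$ give $\tilde{F}^n$ connected and simply connected together with $H^{fin}_2(\tilde{T}^{n+1}, J) = 0$. Connectedness of $\tilde{F}^n$ is condition 1); simple connectedness of $\tilde{F}^n$ yields $H^{fin}_1(\tilde{F}^n, J_2) = 0$, which is condition 4); since the universal cover $\tilde{T}^{n+1}$ is itself simply connected, $H^{fin}_1(\tilde{T}^{n+1}, J_2) = 0$, which is condition 5). Finally, $F^n$ is orientable by Jordan--Brouwer and $T^{n+1} \subset E^{n+1}$ is trivially orientable, and both orientabilities lift to $\tilde{F}^n$ and $\tilde{T}^{n+1}$, supplying conditions 2) and 3).

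The one step that needs real work is condition 6), $H^{inf}_{n-1}(\tilde{T}^{n+1}, J) = 0$, since Lemma \ref{Lemma 20} only provides the ``dual'' vanishing $H^{fin}_2(\tilde{T}^{n+1}, J) = 0$. I would cross over by Poincar\'e--Lefschetz duality for the orientable $(n+1)$-manifold with boundary $\tilde{T}^{n+1}$, in the locally-finite form available from \cite[\S 9]{E},
$$H^{inf}_{n-1}(\tilde{T}^{n+1}, J) \;\cong\; H^{fin}_{2}(\tilde{T}^{n+1}, \tilde{F}^n; J),$$
combined with the relevant piece of the long exact sequence of the pair
$$H^{fin}_2(\tilde{T}^{n+1}) \longrightarrow H^{fin}_2(\tilde{T}^{n+1}, \tilde{F}^n) \longrightarrow H^{fin}_1(\tilde{F}^n).$$
The left term vanishes by Lemma \ref{Lemma 20} and the right term vanishes because $\tilde{F}^n$ is simply connected, so the middle term vanishes, which is condition 6). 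With all six conditions in hand, Lemma \ref{Lemma 19} rules out a normally imbedded 3-complex in $\tilde{T}^{n+1}$, excluding multiplicity $3$ and finishing the proof.
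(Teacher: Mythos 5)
Your proof follows the paper's route exactly: lift to the universal cover via Lemma \ref{Lemma 5}, feed the hypotheses into Lemma \ref{Lemma 20}, and then verify conditions 1)--6) of Lemma \ref{Lemma 19} for $(\tilde T^{n+1},\tilde F^n)$. The only place you differ from the paper is cosmetic: for conditions 2) and 3) the paper reads orientability off from simple connectedness of $\tilde F^n$ and $\tilde T^{n+1}$, while you establish orientability downstairs and lift it, and for condition 6) you spell out the long-exact-sequence computation that the paper compresses into ``apply Poincar\'e duality for infinite manifolds, accounting for condition 5).'' One small caveat: the duality you write, $H^{inf}_{n-1}(\tilde T^{n+1})\cong H^{fin}_2(\tilde T^{n+1},\tilde F^n)$, is the homology-to-homology form; the cleaner statement is $H^{inf}_{n-1}(\tilde T^{n+1})\cong H^2(\tilde T^{n+1},\tilde F^n)$ followed by the universal coefficient theorem, where the Ext term is killed because $H^{fin}_1(\tilde T^{n+1},\tilde F^n)=0$ (which also falls out of the same long exact sequence using condition 5) and connectedness of $\tilde F^n$). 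This is precisely what the paper's ``accounting for condition 5)'' is hiding, so your argument is correct and essentially the paper's.
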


{\sloppy
\begin{proof} Let the multiplicity of $Z^n$ equal 3. According
to Lemma \ref{Lemma 5}, in the universal covering $\tilde T^{n+1}$ of the
polyhedron $T^{n+1}$ there is contained a cutlocus $\tilde Z^n$,
normally imbedded in $\tilde T^{n+1}$as a 3-complex.

Therefore, as in the proof of Theorem \ref{Theorem 1}, it suffices to verify for
$\tilde T^{n+1}$that properties 1) -- 6) of Lemma \ref{Lemma 19} hold.

From Lemma \ref{Lemma 20} it follows that $\tilde F^n$ is connected and
simply-connected; therefore conditions 1), 2), 4) hold. Conditions
3) and 5) hold in view of the simple-connectedness of
$\tilde T^{n+1}$. It remains to verify condition 6). According to
Lemma \ref{Lemma 20}, $H^{fin}_2(\tilde T^{n+1}, J) =0$ We apply Poincar\'e
duality for infinite manifolds to $\tilde T^{n+1}$ (cf, e.g., \cite[\S\S 9, 33]{E}), accounting for condition 5) of Lemma \ref{Lemma 19}; we obtain
$H^{inf}_{n-1}(\tilde T^{n+1}, J) = 0$, that is, condition 6) also
holds, which concludes the proof of the theorem.
\end{proof}

}

In the case $n=2$ (of a surface $F^2$ in 3-dimensional space)
the condition of Theorem \ref{Theorem 2} can be significantly weakened.

\begin{thm} [= Theorem \ref{Theorem 3}] Let $n=2$, $F \in F_R$, and
$h: \pi_1(F) \to \pi_1(T)$ be onto. Then $\kappa(F) \ge \kappa_1R$.
\end{thm}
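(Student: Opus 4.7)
The plan is to adapt the proof of Theorem \ref{Theorem 2}, exploiting the dimension $n=2$ at two places to make do with only surjectivity of $h$. Assume the multiplicity of the central set equals $3$ (otherwise Lemma \ref{Lemma 2} or the focal-point case gives $\kappa(F)\ge\kappa_1 R$ directly); by Lemma \ref{Lemma 5}, $\tilde Z^2$ is a normally imbedded $3$-complex in the universal cover $\tilde T^3$, and the goal is to derive a contradiction via the appropriate version of Lemma \ref{Lemma 19}. Inspection of the proof of Lemma \ref{Lemma 20} shows that connectedness of $\tilde F^2$ uses only the surjectivity of $h$ (two preimages of a basepoint are joined by the lift of a loop in $T$, which may be chosen in $F$ since $h$ is onto), so condition 1) holds; conditions 2), 3) (orientability) follow from $F,T\subset E^3$; and condition 5), $H^{fin}_1(\tilde T^3, J_2)=0$, is immediate from $\pi_1(\tilde T^3)=0$ and Hurewicz.

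The dimension $n=2$ gives two savings in Lemma \ref{Lemma 19}. First, condition 4) is used only through Lemma \ref{Lemma 15} to ensure orientability of the components $\tilde Z^{n-1}_j$; but for $n=2$ these are $1$-manifolds, automatically orientable, so condition 4) may be dropped. Second, condition 6) $H^{inf}_1(\tilde T^3, J)=0$ fails in general here (Poincar\'e--Lefschetz duality identifies it with $H^1(\tilde F^2, J)$, which is nonzero whenever $h$ has kernel with nontrivial free abelianization); but condition 6) is used only in Lemma \ref{Lemma 11} to exclude third-class components. For $n=2$ a third-class $\tilde Z^1_j$ has transitive holonomy in $S_3$, which, since $\pi_1(\tilde Z^1_j)$ is cyclic, must be the cyclic subgroup of order $3$. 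Non-triviality of this holonomy forces $\pi_1(\tilde Z^1_j)\neq 0$, so $\tilde Z^1_j$ is a circle (the only connected $1$-manifold with nontrivial fundamental group). Since $\pi_1(\tilde T^3)=0$ gives $H^{fin}_1(\tilde T^3, J)=0$, this circle bounds a finite integer $2$-chain in $\tilde T^3$; via the deformation retract (condition 8)) this descends to a finite $2$-chain in $\tilde Z^2$ with the same boundary. The $3$-arm $2$-manifold is orientable (the holonomy is cyclic), so Lemma \ref{Lemma 11}'s propagation argument yields a constant integer coefficient $a$ on it, and the boundary coefficient on $\tilde Z^1_j$ then comes out to $3a=1$, a contradiction.

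With second-class components excluded by Lemma \ref{Lemma 12} (orientability of $\tilde T^3$) and third-class by the preceding paragraph, the remaining argument of Lemma \ref{Lemma 19} goes through: Lemma \ref{Lemma 16} applied to the representing graph $\tilde\Gamma$ gives either a cycle, handled via Lemma \ref{Lemma 17} and the intersection computation of Lemma \ref{Lemma 19}(b) (which uses only condition 5)), or a proper tree, handled via Lemma \ref{Lemma 18} together with the fact that the loop $\tilde\delta$ bounds a $2$-chain in the simply connected $\tilde T^3$, forcing intersection number $0$ with $\tilde M'$ and contradicting its construction with intersection number $1$. The main anticipated obstacle is the third-class argument above: one must verify carefully that a third-class component lifted to $\tilde T^3$ is necessarily a compact circle with cyclic holonomy of order $3$, and that Lemma \ref{Lemma 11}'s propagation yields the arithmetic contradiction $3a=1$ over $J$ from only a finite integer $2$-chain, thereby replacing the roles played by $\pi_2(T)=0$ and simple-connectedness of $\tilde F$ in Theorem \ref{Theorem 2}.
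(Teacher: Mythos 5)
Your proof is correct and follows essentially the same route as the paper: verify conditions 1), 3), 5) of Lemma \ref{Lemma 19} for $\tilde T^3$ via surjectivity of $h$ and simple connectedness, drop condition 4) because $1$-manifolds are automatically orientable, and replace condition 6) by the observation that a third-class component $\tilde Z^1_j$ must be a circle (else the holonomy is trivial) and hence a finite cycle, so $H^{fin}_1(\tilde T^3, J)=0$ suffices to run the Lemma \ref{Lemma 11} contradiction. The only difference is that you spell out the $3a=1$ arithmetic and the cyclic-holonomy orientability point, which the paper compresses into a single sentence.
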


\medskip
\hrule\smallskip
\noindent p.~174.
\medskip

\begin{proof} We verify the conditions for applying Lemma \ref{Lemma 19}
to $\tilde T^3$. For $n=2$ condition 6) of Lemma \ref{Lemma 19} is found to
be unnecessary; concerning this, this condition was needed to
prove the nonexistence of manifolds $\tilde Z^{n-1}_j$ of the
third class (Lemma \ref{Lemma 11}). But a manifold $\tilde Z^1_j$ of the
third class would need to be a simply closed curve, since
otherwise [the holonomy would be trivial]. Consequently, it may
be assumed that $\tilde Z^1_j$ is a {\em finite} cycle of
$\tilde Z^2$. The orientability of $\tilde Z^1_j$ is evident, and
the exclusion of manifolds of the third follows from the
triviality of $H^{fin}_1(\tilde T^3, J)$ for a simply-connected
polyhedron $\tilde T^3$.

Moreover, conditions 2) and 4) are also found to be unnecessary.
Concerning this, in the proof of Lemma \ref{Lemma 19} conditions 2) and 4)
were used only in point a), in order to claim the orientability of
$\tilde Z^{n-1}_j$, which for $n=2$ holds automatically.

Conditions 3, 5) hold for simply-connected polyhedron $\tilde T$,
and it is only needed to verify condition 1). But for the proof of
connectedness of $\tilde F^n$ in Lemma \ref{Lemma 20} only the {\em ontoness}
of the homomorphism $h$ was used, which is assumed for
Theorem \ref{Theorem 3}.
\end{proof}

\section{Applications to some simple types of surfaces}\label{sec:applications}

\noindent 1. We consider now several particular cases, presenting interest
from a geometrical point of view. We give a definition of a
{\em solid homeomorphic to a ball with k handles}.

Let $K^{n+1}$ be a regular closed ball in $E^{n+1}$ and $f_j,
j=1,\ldots, k$ be homeomorphisms from $K^n \times [0,1]$
into $E^{n+1}$ such that the sets $Q_j = f_j(K^n\times (0,1))
\subset E^{n+1} \setminus K^{n+1}$, $\bar Q_j$ are pairwise
nonintersecting, and $[f_j(K^n \times 0) \cap f_j(K^n \times 1)]
\subset K^{n+1}, j=1,\ldots, k$. The polyhedron $h^{n+1}_k =
K^{n+1} \cap \bigcap^k_{j=1} Q_j$ is called an
$(n+1)$-dimensional ball with $k$ handles, and the boundary of
$h^{n+1}_k$ in $E^{n+1}$ is called an $n$-dimensional sphere
with $k$ handles and is designated by $S^n_k$.

A {\em regular (n+1)-dimensional toroidal ring $Tr^{n+1}$} is
the direct product of the disk $K^2$ by $E^{n-1}$ [sic. Should 
this be $S^{n-1}$? or $(S^1)^{n-1}$?]

Finally, we designate the $n$-dimensional sphere by $S^n$.

We now introduce the following classes of surfaces (cf. \S \ref{sec:introduction}): $\mathcal{S}$ consists of all surfaces of class $F_R$
homeomorphic to $S^n$; $\mathcal{H}_k$ consists of all surfaces of
class $F_R$ homeomorphic to $S^n_k$; $\mathcal{H}^0_k$ consists
of all surfaces of class $F_R$ bounding a solid homeomorphic 
to $h^{n+1}_k$; $Tr^0$ consists of all surfaces of class $F_R$
bounding a solid homeomorphic to $Tr^{n+1}$.

We recall that $\kappa(M) = \inf_{F \in M} \kappa(F), M\subset F_R$.

\noindent 2.
\begin{thm} [= Theorem \ref{Theorem 4}] $\kappa(\mathcal{S}) \ge \kappa_1R$;
$\kappa(\mathcal{H}^0_k) \ge \kappa_1R$; $\kappa(Tr^0) \ge \kappa_1R$.

For $n=2$ these inequalities reduce to equality.
\end{thm}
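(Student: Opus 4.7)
The plan is to dispatch each of the three lower bounds by verifying that the solid body $T^{n+1}$ bounded by $F^n$ meets the topological hypotheses of one of Theorems \ref{Theorem 1}, \ref{Theorem 2}, or \ref{Theorem 3}. The matching upper bounds in the case $n=2$ are deferred to the explicit examples promised for Part II.

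For $\mathcal{S}$: if $F^n$ is homeomorphic to $S^n$ with $n \ge 2$, then $H_1(F^n, J_2) = 0$, and Theorem \ref{Theorem 1} applies immediately.

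For $\mathcal{H}^0_k$: the ball-with-$k$-handles $h^{n+1}_k$ is built from $K^{n+1}$ by attaching $k$ disjoint $1$-handles, so it deformation-retracts onto a wedge of $k$ circles; hence $\pi_1(T) \cong F_k$ (free of rank $k$) and $\pi_2(T) = 0$. When $n \ge 3$, the boundary $S^n_k$ is homeomorphic to a connected sum of $k$ copies of $S^{n-1} \times S^1$, so $\pi_1(F^n) \cong F_k$ and the inclusion induces an isomorphism; Theorem \ref{Theorem 2} applies. When $n = 2$, $S^2_k$ is the closed orientable genus-$k$ surface, whose surface-group fundamental group maps onto $F_k$ by killing the single commutator relation, so the inclusion is surjective on $\pi_1$ and Theorem \ref{Theorem 3} applies.

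For $Tr^0$: interpreting $Tr^{n+1}$ as the natural solid torus $K^n \times S^1$ with boundary $F^n = S^{n-1} \times S^1$, we have $\pi_1(T) \cong \mathbb{Z}$ and $\pi_2(T) = 0$. For $n \ge 3$, $S^{n-1}$ is simply-connected, so $\pi_1(F^n) \cong \mathbb{Z}$ and the inclusion is an isomorphism; Theorem \ref{Theorem 2} applies. For $n = 2$, $F^2$ is the $2$-torus with $\pi_1 \cong \mathbb{Z}^2$, and the inclusion projects it onto the longitudinal $\mathbb{Z}$ factor, which is surjective; Theorem \ref{Theorem 3} applies.

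The main obstacle is keeping straight the dichotomy between $n = 2$ and $n \ge 3$ in the handlebody and toroidal cases: the stronger isomorphism hypothesis of Theorem \ref{Theorem 2} succeeds in higher dimensions because $S^{n-1}$ becomes simply-connected, whereas in dimension $2$ the boundary carries extra one-dimensional homotopy and we must fall back on the weaker surjectivity hypothesis of Theorem \ref{Theorem 3}, which is only available for $n=2$. For the second assertion, the matching upper bounds $\kappa(F) \le \kappa_1 R + \epsilon$ in each class when $n=2$ require constructing explicit surfaces whose maximal inscribed balls have radii arbitrarily close to $\kappa_1 R$ from above; these constructions are promised in Part II and, combined with the lower bounds proved here, yield the claimed equalities.
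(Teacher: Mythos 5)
Your treatment of $\mathcal{S}$ and $\mathcal{H}^0_k$ agrees in substance with the paper: for $\mathcal{S}$ one quotes Theorem \ref{Theorem 1} directly, and for $\mathcal{H}^0_k$ one observes that $h^{n+1}_k$ deformation retracts onto a wedge $T^1$ of $k$ circles lying in $F^n$, giving $\pi_1(T)\cong F_k$ and $\pi_2(T)=0$; for $n\ge 3$ the inclusion is a $\pi_1$-isomorphism (Theorem \ref{Theorem 2}), while for $n=2$ it is merely onto (Theorem \ref{Theorem 3}). This is exactly the paper's argument.

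The $Tr^0$ case is where the proposal goes wrong, and the source of the error is a misreading of the definition of $Tr^{n+1}$. You take $Tr^{n+1}=K^n\times S^1$ with boundary $S^{n-1}\times S^1$. But the paper defines the toroidal ring as the product of the $2$-disk $K^2$ with a factor of dimension $n-1$ (the text says $E^{n-1}$; the translator suggests $(S^1)^{n-1}$ as the intended reading, which is confirmed by the fact that the proof takes the universal cover $\tilde T^{n+1}\cong K^2\times E^{n-1}$ with $\tilde F^n\cong S^1\times E^{n-1}$). Two symptoms show your reading cannot be the paper's. First, under your reading $K^n\times S^1\cong h^{n+1}_1$ for \emph{every} $n$, so the $Tr^0$ claim would be subsumed by the $\mathcal{H}^0_1$ claim and there would be nothing to prove; the paper explicitly notes $Tr^0=\mathcal{H}^0_1$ only for $n=2$. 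Second, under the paper's reading the boundary $F^n$ is an $n$-torus with $\pi_1\cong\mathbb{Z}^n$ mapping onto $\pi_1(T^{n+1})\cong\mathbb{Z}^{n-1}$ with a kernel, so the inclusion is \emph{not} a $\pi_1$-isomorphism and Theorem \ref{Theorem 2} simply does not apply. This is precisely why the paper does not dispatch $Tr^0$ via Theorems \ref{Theorem 2}/\ref{Theorem 3} at all, but instead returns to Lemma \ref{Lemma 19} in the universal cover: conditions 1), 2), 3), 5), 6) are checked directly, condition 4) ($H^{fin}_1(\tilde F^n,J_2)=0$) genuinely fails because $\tilde F^n\simeq S^1\times E^{n-1}$, and the paper substitutes a hands-on argument (a careful orientation/intersection computation against the cycle $z^{n-1}=P\times E^{n-1}$) to recover the one consequence of 4) that is actually needed, namely the orientability of the $\tilde Z^{n-1}_j$. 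Your proposal does not engage with any of this; to repair it you would need either to adopt the paper's definition of $Tr^{n+1}$ and reproduce that Lemma \ref{Lemma 19} argument, or to justify replacing Theorem \ref{Theorem 2}'s isomorphism hypothesis by a weaker condition that the torus boundary does satisfy.
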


\begin{proof} In this part of the work we limit ourselves
to the proofs of the inequalities $\kappa(\mathcal{S}) \ge
\kappa_1R$, $\kappa(\mathcal{H}^0_k) \ge \kappa_1R$,
$\kappa(Tr^0) \ge \kappa_1R$; the sharp bound for classes
$\mathcal{S}$, $\mathcal{H}^0_k$ for $n=2$ and any $k = 1, 2, \ldots$
will be established in the second part of the work by the
construction of corresponding examples (we note that for
$n = 2$ we have $Tr^0 = \mathcal{H}^0_1$).

\medskip
\hrule\smallskip
\noindent p.~175
\medskip

\begin{description}
\item[a)] If $F^n \in \mathcal{S}$, then the assertion of the theorem
follows from Theorem \ref{Theorem 1}.
\item[b)] If $F^n \in \mathcal{H}^0_k \, (k>0)$, then, as is easily
seen, $F^n$ contains a subset $T^1$, homeomorphic to the
union of $k$ circles with one common point, such that there
exists a deformation 
$$\omega_t: T^{n+1} \times [0,1] \to T^{n+1},$$
$\omega_t(P) = P \, (P \in T^1, 0 \le t \le 1)$, of the identity
map $\omega_0$ into $\omega_1$, $\omega_1(T^{n+1})
\subset T^1$. For $n = 2$ thus it follows that $h$ is onto
and Theorem \ref{Theorem 3} can be applied.

Hence it follows that the homomorphism $h : \pi_1(F^n) \to
\pi_1(T^{n+1})$, induced by $F^n \subset T^{n+1}$, is an
isomorphism; $\pi_2(T^{n+1}) = \pi_2(T^1)$. Considering
the universal covering polyhedron $\tilde T^1$, it is easy
to convince oneself that $\pi_2(\tilde T^1) = 0$, whence
$\pi_2(T^1) = 0$; hence Theorem 2 can be applied to a surface
$F^n \in \mathcal{H}^0_k$, which leads to the required bound.

[$h$ is not an isomorphism for $n = 2$, only onto, but that
case has been covered.]

\item[c)] Let $F^n \in Tr^0$. We construct the universal
covering polyhedron $\tilde T^{n+1}$ for the solid $T^{n+1}$,
bonded by $F^n$ in $E^{n+1}$. Evidently, $\tilde T^{n+1}$ is
homeomorphic to $K^2 \times E^{n-1}$, $\tilde F^n$ is
homeomorphic to $S^1 \times E^{n-1}$ [the text was written
$E^{n+1}$]. We verify the conditions for the applicability
of Lemma \ref{Lemma 19} to $\tilde T^{n+1}$.
\end{description}

Evidently, all conditions besides 4) hold. But 4) was used
only in point a) of the proof of Lemma \ref{Lemma 19} for establishing
the orientability of manifolds $\tilde Z^{n-1}_j$. By Lemma
6, 5), for this it suffices to prove the orientability of the
manifolds $\tilde F^{n-1}_j \subset \tilde F^n$.

By a small isotopic deformation $\tilde F^{n-1}_j$ can be
moved to general position relative to the cycle $z^{n-1} =
P \times E^{n-1}$, $P\in S^1$ (the construction of such a
deformation is simplified thanks to the special from of
$z^{n-1}$). We also desigante the cycle obtained as a result
of the deformation by $\tilde F^{n-1}_j$ and we note that
in the deformation the characteristic of orientability of
$\tilde F^{n-1}_j$ is not changed. We construct, furthermore,
a simplicial subdivision of $\tilde F^n$, subcomplexes of
which are $\tilde F^{n-1}_j$ and $z^{n-1}$. since $z^{n-1}$
is the unique basis of homology cycles mod 2 of the
polyhedron $\tilde F^n$, there exists a chain $c^n$ (infinite)
mod 2 constructed in the above subdivision, such that
\begin{equation}\partial c^n = \tilde F^{n-1}_j + z^{n-1}. \label{eq:29}
\end{equation}

We take an integral chain $z^{n-1}_*$, which in mod 2 reduces
to $z^{n-1}$. For each component of string connectedness
$c^n_\alpha$ of the chain $c^n$ we choose an orientation of
the simplices of $C^n_\alpha$ such that for the integral chain $c^n_{\alpha *}$ obtained 
$$\partial c^n_\alpha = \tilde F^{n-1}_{j\alpha *} +
c^{n-1}_{\alpha *}, $$
where $\tilde F^{n-1}_{j\alpha *}$ consists of oriented
simplices of $\tilde F^{n-1}_j$, and $z^{n-1}_{\alpha *}$
consists of oriented simplices of $z^{n-1}$. It can be achieved
in this that the orientation of the simplices of
$z^{n-1}_{\alpha *}$ should be in accord with the orientation
fixed above $z^{n-1}$ of the cycle $z^{n-1}$; concerning this,
$c^n_\alpha$ lies in one of the two domains into which
$z^{n-1}$ separates $\tilde F^n$, and therefore for all oriented
simplices $\zeta^n$ of the chain $c^n_{\alpha *}$ and the
simplices $\zeta^{n-1}$ of the chain $z^{n-1}_*$ adjacent
with the coefficients of incidence $[\zeta^n : \zeta^{n-1}]$
are 1 throughout.

Since for each simplex of $\tilde F^{n-1}_j$ and $z^{n-1}$
the incidence equals 1 with simplices of $c^n$, the chains
$\tilde F^{n-1}_{j\alpha *}$, $z^{n-1}_{\alpha *}$,
$\tilde F^{n-1}_{\beta *}$, $z^{n-1}_{\beta *}$ for
$\alpha \ne \beta$ do not have simplices in common. We put
$c^n_* = \sum_\alpha c^n_{\alpha *}$; then
\begin{equation}\partial c^n_* = \sum_\alpha \tilde F^{n-1}_{j\alpha *} +
\sum_\alpha z^{n-1}_{\alpha *}. \label{eq:30}
\end{equation}

Inasmuch as $c^n_*$ contains all simplices of $c^n$, from
(\ref{eq:29}) and (\ref{eq:30}) it follows that
$$\tilde F^{n-1}_{j*} = \sum_\alpha \tilde F^{n-1}_{j\alpha *},
\quad z^{n-1}_{**} = \sum_\alpha z^{n-1}_{\alpha *}$$
in reduction mod 2 is transformed, correspondingly, to
$\tilde F^{n-1}_j$ and $z^{n-1}$ Due to the choice of
orientation of the chains $c^n_{\alpha *}$, $z^{n-1}_{**}$
coincides with $z^{n-1}$, and is, consequently, an integral
cycle. But then from (\ref{eq:30}) it follows that also
$\tilde F^{n-1}_{j*}$ is an integral cycle, which proves
the orientability of $\tilde F^{n-1}_j$.
\end{proof}

\noindent Submitted 30.VI.1961

\end{document}